\numberwithin{equation}{section}
\patchcmd{\thmhead}{(#3)}{#3}{}{}
\newtheorem{theorem}{Theorem}[section]
\newtheorem{lemma}[theorem]{Lemma}
\newtheorem{definition}[theorem]{Definition}
\newtheorem{corollary}[theorem]{Corollary}
\newtheorem{proposition}[theorem]{Proposition}
\newtheorem{remark}[theorem]{Remark}
\DeclareMathAlphabet{\mathpzc}{OT1}{pzc}{m}{it}
\def\0{{\rm \bf{0}}}
\def\B{{\mathcal{B}}}
\DeclareMathOperator*{\esssup}{ess\,sup}
\DeclareMathOperator*{\essinf}{ess\,inf}
\def\B2star{\overline{B}_X^{w(X^{\ast\ast},X^{\ast})}}
\title{Marcinkiewicz-Zygmund inequalities in variable Lebesgue spaces}\thanks{This work was partially supported by CONICET PIP  11220200102366CO,
ANPCyT PICT 2018-04104,
and UNCOMA PIN I 04/B251. The first author has a doctoral fellowship from CONICET
}
\keywords{}
\subjclass[2010] {}
\date{}
\begin{document}
\baselineskip=.65cm

\author[Bonich]{Marcos Bonich}
\address[Bonich]{IMAS (UBA-CONICET), Argentina.}
\email{\texttt{mjbonich@dm.uba.ar}}

\author[Carando]{Daniel Carando}
\address[Carando]{Departamento de Matem\'atica, Facultad de Cs. Exactas y Naturales, Universidad de
Buenos Aires and IMAS (UBA-CONICET), Argentina.}
\email{\texttt{dcarando@dm.uba.ar}}

\author[Mazzitelli]{Martin Mazzitelli}
\address[Mazzitelli]{Instituto Balseiro, CNEA-Universidad Nacional de Cuyo, CONICET, San Carlos de Bariloche, Argentina.}
\email{\texttt{martin.mazzitelli@ib.edu.ar}}

\begin{abstract}
We study $\ell^r$-valued extensions of linear operators defined on Lebesgue spaces with variable exponent. Under some natural (and usual) conditions on the exponents, we characterize $1\leq r\leq \infty$ such that every bounded linear operator $T\colon L^{q(\cdot)}(\Omega_2, \mu)\to L^{p(\cdot)}(\Omega_1, \nu)$ has a bounded $\ell^r$-valued extension. We consider both non-atomic measures and measures with atoms and show the differences that can arise. We present some applications of our results to weighted norm inequalities of linear operators and vector-valued extensions of fractional operators with rough kernel.
\end{abstract}

\subjclass[2020]{Primary: 47A63. Secondary: 47B38, 46E30	 }

\keywords{Vector-valued inequalities, variable Lebesgue spaces, linear operators }

\maketitle

\section{Introduction}\label{intro}
Given a bounded linear operator $T\colon L^q(\Omega_2, \mu) \to L^p(\Omega_1, \nu)$, we  consider the \emph{natural} vector-valued extension given by
$$
\widetilde{T}\left((f_k)_k\right)= \left(T(f_k)\right)_k,
$$
where $(f_k)_k$ is any finite sequence of functions in $L^q(\Omega_2, \mu)$.
It is natural to ask if $\widetilde{T}$ extends to a  well defined bounded operator from $L^q(\mu,\ell^r)$ into $L^p(\nu,\ell^r)$.
By density, this is equivalent to asking if there exists a constant $C_T>0$ such that
\begin{equation}\label{eq-MaZy-1}
 \left\| \left(\sum_k |T(f_k)|^r\right)^{1/r} \right\|_{L^p(\Omega_1, \nu)}  \leq C_T  \left\| \left(\sum_k |f_k|^r\right)^{1/r} \right\|_{L^q(\Omega_2, \mu)}.
\end{equation}
for all $(f_k)_k \in L^q(\mu,\ell^r)$.
Vector-valued inequalities arise from the very beginning of harmonic analysis, closely related to the study of maximal and singular operators.
In what follows, we are interested in determining possible vector-valued extensions of bounded linear operators. We distinguish between two related problems:
\begin{itemize}
\item[(P1)] For which $(p,q,r)$ is it true that \emph{every} bounded linear operator $T\colon L^q(\Omega_2, \mu)\to L^p(\Omega_1, \nu)$ admits a bounded  $\ell^r$-valued extension?
\item[(P2)] Fixed an operator $T\colon L^q(\Omega_2, \mu)\to L^p(\Omega_1, \nu)$, for which $0< r\leq\infty$ does the operator $T$ have a bounded $\ell^r$-valued extension?
\end{itemize}
Marcinkiewicz and Zygmund were the first to study problem (P1). Note that, by a standard application of the closed graph theorem,  every $T\colon L^q(\Omega_2, \mu)\to L^p(\Omega_1, \nu)$ admits a bounded  $\ell^r$-valued extension if and only if \eqref{eq-MaZy-1} holds with $C_T = C\|T\|$ for some $C$ independent from $T$.
Marcinkiewicz and Zygmund proved in \cite{MarZyg} that given any $0<p,q<\infty$, every $T\colon L^q(\Omega_2, \mu)\to L^p(\Omega_1, \nu)$ admits an $\ell^2$-valued extension: there is a constant $C\geq 1$ such that
$$
 \left\| \left(\sum_k |T(f_k)|^2\right)^{1/2} \right\|_{L^p(\Omega_1, \nu)}  \leq C \|T\| \left\| \left(\sum_k |f_k|^2\right)^{1/2} \right\|_{L^q(\Omega_2, \mu)}.
$$
In \cite{MarZyg} it is also proved that if $0<\max\{p,q\}<r<2$, then there is a constant $C\geq 1$ such that
\begin{equation}\label{MZ}
 \left\| \left(\sum_k |T(f_k)|^r\right)^{1/r} \right\|_{L^p(\Omega_1, \nu)}  \leq C \|T\| \left\| \left(\sum_k |f_k|^r\right)^{1/r} \right\|_{L^q(\Omega_2, \mu)}.
\end{equation}
for every $T\colon L^q(\Omega_2, \mu)\to L^p(\Omega_1, \nu)$ (equivalently, $T$ admits an $\ell^r$-valued extension).

The infimum of all the constants $C\geq 1$ satisfying \eqref{MZ} for every $T\colon L^{q}(\Omega_2, \mu)\to L^{p}(\Omega_1, \nu)$ (for arbitrary $\sigma$-finite measure spaces $(\Omega_1, \nu)$ and $(\Omega_2, \mu)$), every $(f_k)_{k=1}^N\subset L^{q}(\Omega_2, \mu)$ and every $N\in \mathbb{N}$, is denoted by $k_{q,p}(r)$.
Many authors continued the line of study of Marcinkiewicz and Zygmund. In \cite{DefJun, GasMal}, a systematic study of the constants $k_{q,p}(r)$ is addressed for the full range $1\leq p,q,r \leq \infty$. We summarize the results of \cite{DefJun, GasMal} (see also \cite{RdFTor}) in the following theorem.
\begin{theorem}[\cite{DefJun, GasMal, RdFTor}]\label{teoDefJun}
Let $1\leq p,q,r \leq \infty$ and let $I(p,q)$ be the interval given by
$$I(p,q) = \left\{
\begin{array} {c l}
(q,2] & \text{if $p<q<2$,}\\
\left[2,p\right)  & \text{if $2<p<q$,}\\
\left[ \min\{2,q\}, \max\{2,p\} \right]  & \text{otherwise.}
\end{array}
\right .$$
\begin{enumerate}[label=(\roman*)]
\item If $q=1$ or $p=\infty$ then $k_{q,p}(r)=1$ for every $1\leq r \leq \infty$.
\item In any other case, $k_{q,p}(r)<\infty$ if and only if $r \in I(p,q)$.
\end{enumerate}
Moreover, the exact value of constant $k_{q,p}(r)$ is known, except for $1 \leq p < r=2 < q \leq \infty$.
\end{theorem}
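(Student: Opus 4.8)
The plan is to pass to the finite-dimensional reformulation of the problem. A routine discretization (restrict to finite sub-$\sigma$-algebras, using that conditional expectations are contractions on every $L^s$, together with $\sigma$-finiteness) shows that
$$k_{q,p}(r)=\sup_{n\in\N}\,\sup\left\{\frac{\|A\otimes\mathrm{id}_{\ell^r}\colon\ell^q_n(\ell^r)\to\ell^p_n(\ell^r)\|}{\|A\colon\ell^q_n\to\ell^p_n\|}\;:\;A\neq 0\right\},$$
so everything becomes a statement about matrices. Two structural facts organize the argument. First, since $(L^p(\nu,\ell^r))^{\ast}=L^{p'}(\nu,\ell^{r'})$ and $(T\otimes\mathrm{id}_{\ell^r})^{\ast}=T^{\ast}\otimes\mathrm{id}_{\ell^{r'}}$, taking adjoints yields the duality $k_{q,p}(r)=k_{p',q'}(r')$, which halves the case analysis. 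Second, Minkowski's integral inequality --- in its convex form when the outer exponent is $\ge 1$, and its reverse (superadditive) form when it is $\le 1$ --- delivers the ``diagonal'' values for free: a direct computation gives $k_{q,p}(p)=1$ whenever $p\ge q$, and dually $k_{q,p}(q)=1$ whenever $p\ge q$. The same kind of computation, together with the fact that the $\ell^r$-valued norm on $L^p(\nu)\otimes\ell^r$ is dominated by the projective tensor norm (and that $L^1(\mu,\ell^r)=L^1(\mu)\,\widehat{\otimes}_\pi\ell^r$ isometrically), gives $k_{1,p}(r)=1$ for every $r$; dualizing this gives $k_{q,\infty}(r)=1$, which is part (i).

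For the positive half of (ii) I would first recover $k_{q,p}(2)<\infty$ by Gaussian randomization: with $(g_k)$ i.i.d.\ standard Gaussians one has $\|(\sum_k|h_k|^2)^{1/2}\|_{L^s}=\gamma_s^{-1}(\mathbb{E}\|\sum_kg_kh_k\|_{L^s}^s)^{1/s}$, so that
$$\left\|\left(\sum_k|Tf_k|^2\right)^{1/2}\right\|_{L^p}=\gamma_p^{-1}\left(\mathbb{E}\left\|T\left(\sum_kg_kf_k\right)\right\|_{L^p}^p\right)^{1/p}\le\gamma_p^{-1}\|T\|\left(\mathbb{E}\left\|\sum_kg_kf_k\right\|_{L^q}^p\right)^{1/p},$$
and the last factor is $\le\gamma_q\|(\sum_k|f_k|^2)^{1/2}\|_{L^q}$ by Jensen's inequality when $p\le q$, and $\le C_{p,q}\,\gamma_q\|(\sum_k|f_k|^2)^{1/2}\|_{L^q}$ by a Kahane-type comparison of the moments of a Gaussian random vector in $L^q(\mu)$ when $p>q$. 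Combining the value $r=2$ with the two diagonal endpoints above and with complex interpolation in the parameter $r$ --- legitimate since $[L^p(\ell^{r_0}),L^p(\ell^{r_1})]_\theta=L^p(\ell^r)$ for $1/r=(1-\theta)/r_0+\theta/r_1$, so the very same operator $\widetilde T$ interpolates --- one covers $I(p,q)$ in all the ``otherwise'' cases with $p\ge q$; here one also uses that when $q\le 2\le p$ even $r=2$ is attained with constant $1$, since then $\|(\sum_k|h_k|^2)^{1/2}\|_p\le(\sum_k\|h_k\|_p^2)^{1/2}$ while $\|(\sum_k|f_k|^2)^{1/2}\|_q\ge(\sum_k\|f_k\|_q^2)^{1/2}$. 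What remains is the half-open interval $(q,2]$ for $p<q<2$ --- which follows from the Marcinkiewicz--Zygmund estimate \eqref{MZ}, valid for $\max\{p,q\}<r<2$ and recalled in the introduction, together with the already-established value $r=2$ --- and then, via the duality $k_{q,p}(r)=k_{p',q'}(r')$, the interval $[2,p)$ for $2<p<q$; in the remaining case $p\le 2\le q$ one has $I(p,q)=\{2\}$, so only $r=2$ is claimed.

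The genuinely hard part is the converse: $k_{q,p}(r)=\infty$ for $r\notin I(p,q)$. Rank-one operators always give constant $1$ (again by Minkowski's inequality), so for each forbidden $r$ one must exhibit a sequence of matrices $A_n\colon\ell^q_n\to\ell^p_n$ with $\|A_n\otimes\mathrm{id}_{\ell^r}\|/\|A_n\|\to\infty$. The natural candidates are ``mixing'' matrices --- normalized random sign or Gaussian matrices, or explicit ones built from the discrete Fourier/Hadamard transform composed with diagonal operators --- tested against the coordinate vectors $f_k=e_k$ (or, where needed, against random or Fourier vectors): one then has to bound $\|A_n\colon\ell^q_n\to\ell^p_n\|$ from above, using standard estimates for the norms of such matrices between $\ell^q_n$ and $\ell^p_n$ (obtained by interpolation from the $\ell^1_n\to\ell^2_n$, $\ell^2_n\to\ell^2_n$ and $\ell^2_n\to\ell^\infty_n$ estimates), bound $\|A_n\otimes\mathrm{id}_{\ell^r}\|$ from below by a good choice of the tested sequence, and verify that the two resulting powers of $n$ cross precisely at the endpoints of $I(p,q)$. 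I expect this sharpness step to be the main obstacle --- in particular, getting the exact exponent of $n$ on each side (which is exactly what forces the shape of $I(p,q)$), and dealing with the closed-versus-open endpoints, notably ruling out $r=q$ itself when $p<q<2$. These sharpness computations, and the identification of the extremal operators that yield the exact value of $k_{q,p}(r)$, are carried out in \cite{DefJun,GasMal,RdFTor}; the single exception $1\le p<r=2<q\le\infty$ is exactly the configuration in which the two Minkowski-type comparisons used above to force constant $1$ both reverse, so that neither they nor their duals apply and no extremal operator is known.
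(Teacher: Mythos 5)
The statement you are proving is quoted in the paper as background from \cite{DefJun, GasMal, RdFTor}; the paper itself gives no proof of it, so there is no internal argument to compare against line by line. Measured against the literature, the \emph{positive} half of your proposal (finiteness of $k_{q,p}(r)$ for $r\in I(p,q)$, and part (i)) is essentially correct and follows the standard route: discretization to matrices, the isometric duality $k_{q,p}(r)=k_{p',q'}(r')$, the two Minkowski-type diagonal identities $k_{q,p}(p)=k_{q,p}(q)=1$ for $q\le p$, Gaussian (resp.\ $r$-stable) randomization for $r=2$ (resp.\ $\max\{p,q\}<r<2$), and complex interpolation in $r$. These are in fact the same devices the paper redeploys in the variable-exponent setting: your duality step is Proposition~\ref{duality formula variable}, and your stable-variable randomization is exactly Lemmas~\ref{lemma_of_inequality_with_integrals} and~\ref{lemma_of_integration} combined as in the proof of Theorem~\ref{decreasing and increasing}(i). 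Your case bookkeeping is also right (in particular you correctly isolate $p\le 2\le q$, where $I(p,q)=\{2\}$, and you correctly note that $q\le 2\le p$ gives constant $1$ at $r=2$).

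The genuine gap is the converse, which is where the real content of the cited theorem lives: you only describe a search strategy (``mixing matrices \dots one then has to bound \dots I expect this sharpness step to be the main obstacle'') rather than produce the lower bounds. Concretely, what is needed and what \cite{DefJun, GasMal} actually supply are precise two-sided asymptotics such as $k_{\ell^q_n,\ell^p_n}(r)\asymp n^{1/r-1/q}$ for $p\le q<r$ (and its dual), together with the exact formula $k_{q,p}(r)=c_{r,q}/c_{r,p}$ in terms of moments of $r$-stable variables for $p\le q<r<2$ --- it is this formula, via $c_{q,q-\epsilon}\to\infty$, that rules out the endpoint $r=q$ when $p<q<2$, and the same mechanism is what the paper itself invokes from \cite[Section~4]{DefJun} in the proof of Theorem~\ref{teoDefJunVariable}. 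The extremal operators here are not random matrices but essentially the identity $\ell^q_n\to\ell^p_n$ (and diagonal perturbations thereof) tested against $r$-stable vectors, so your proposed candidates would need to be replaced or at least supplemented. As written, then, your text is a correct proof of half the theorem plus an accurate map of where the other half must come from; to be a proof of the statement it must import (or reprove) the sharpness computations of \cite{DefJun, GasMal}.
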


The previous result characterizes all the exponents $1\leq p,q,r\leq \infty$ such that \emph{every} operator from $L^q(\Omega_2, \mu)$ into $L^p(\Omega_1, \nu)$ has a $\ell^r$-valued extension. Of course, when we consider some particular operator we can say more. For instance, it is not difficult to see that if $T\colon L^q(\Omega_2, \mu)\to L^p(\Omega_1, \nu)$ is a positive operator (that is, $T(f)\geq 0$ for every $f\geq 0$) then inequality \eqref{MZ} holds for all $1\leq r\leq \infty$, with $C=1$. The problem of obtaining vector-valued extensions of particular linear operators goes back to some works of Boas, Bochner and M. Riesz, among others, where  vector-valued extensions of the Hilbert transform are addressed. These early works motivated the study of vector-valued inequalities of Calder\'on-Zygmund operators in the line of problem (P2) stated above. Although there is a wide interest on these topics, including vector-valued inequalities for Calder\'on-Zygmund operators on variable Lebesgue spaces (see, for instance, \cite{CruFioMarPer, CruMarPer} and references therein), in this work we mainly focus on Problem (P1).
Our goal is to extend Theorem~\ref{teoDefJun} to the context of variable Lebesgue spaces. We begin by presenting the families of variable exponents considered in this work.

\begin{definition}\label{def p variable-pesos}
Let $(\Omega, \Sigma, \nu)$ be a $\sigma$-finite, complete measure space. We denote by $\mathcal{P}(\Omega,\nu)$ to the set of all $\nu$-measurable functions $p\colon \Omega \to [1,\infty]$. We also define:
$$
p_{_-}=\essinf_{x\in \Omega}p(x) \quad \text{and} \quad p_{_+}=\esssup_{x\in \Omega}p(x)
$$
and $\mathcal{P}_b(\Omega,\nu)=\{p\in \mathcal{P}(\Omega,\nu):\,\, 1<p_-\leq p_+<\infty\}$.
\end{definition}

Let us now recall the definition of Lebesgue spaces with variable exponent. First, let
$$
\Omega^{p(\cdot)}_\infty=\{x\in \Omega:\,\, p(x)=\infty\}
$$
and define the modular functional associated to $p(\cdot)$ by
$$
\rho_{p(\cdot)}(f)=\int_{\Omega\setminus \Omega^{p(\cdot)}_\infty}|f(x)|^{p(x)}d\nu(x) + \|f\|_{L^\infty(\Omega^{p(\cdot)}_\infty)}.
$$

\begin{definition}\label{def p variable-espacios}
We denote by $L^{p(\cdot)}(\Omega, \nu)$  the Banach space of $\nu$-measurable functions $f$ such that $\rho_{p(\cdot)}(f/\lambda)<\infty$ for some $\lambda>0$, endowed with the norm
$$
\|f\|_{L^{p(\cdot)}(\Omega, \nu)}=\inf\{\lambda>0:\,\, \rho_{p(\cdot)}(f/\lambda)\leq 1\}.
$$
If there is no ambiguity, we simply denote this norm by $\|\cdot\|_{L^{p(\cdot)}(\nu)}$ or $\|\cdot\|_{L^{p(\cdot)}(\Omega)}$.
\end{definition}

Variable Lebesgue spaces were introduced by Orlicz in the early `30's. However, it was not until the `50's that Nakano \cite{Nak1, Nak2} studied systematically some of their properties, as a special case of his theory of modular spaces. Independently, Tsenov and Sharapudinov \cite{Sha, Tse} discovered and studied variable Lebesgue spaces, and were the first to consider applications to harmonic analysis and the calculus of variations. In the last years, the interest on variable Lebesgue spaces has increased dramatically due to their different applications, such as image processing or modeling of electrorheological fluids (see, for instance, \cite{CheLevRao, DieHarHasRuz}). In the present article we contribute to  the development of the theory of linear operators on variable Lebesgue spaces.

\subsection{Preview of the results} We want to characterize the exponents  $p(\cdot)$, $q(\cdot)$ and $r$ such that every operator
$T\colon L^{q(\cdot)}(\Omega_2, \mu)\to L^{p(\cdot)}(\Omega_1, \nu)$ admits a bounded $\ell^r$-extension. As above, by density and a standard  application of the closed graph theorem, this is equivalent to the existence of $C>0$ such that
\begin{equation}\label{MZ p variable}
 \left\| \left(\sum_{k=1}^N |T(f_k)|^r\right)^{1/r} \right\|_{L^{p(\cdot)}(\Omega_1, \nu)}  \leq C \|T\| \left\| \left(\sum_{k=1}^N |f_k|^r\right)^{1/r} \right\|_{L^{q(\cdot)}(\Omega_2, \mu)}.
\end{equation}
holds for every such $T$ and every finite sequence $(f_k)_k$  in $ L^{q(\cdot)}(\Omega_2, \mu)$ . We denote by $k_{q(\cdot),p(\cdot)}(r)$ the infimum of all the constants $C\geq 1$ such that \eqref{MZ p variable} holds for every $T\colon L^{q(\cdot)}(\Omega_2, \mu)\to L^{p(\cdot)}(\Omega_1, \nu)$, every $(f_k)_{k=1}^N\subset L^{q(\cdot)}(\Omega_2, \mu)$ and every $N\in \mathbb{N}$ (we set $k_{q(\cdot),p(\cdot)}(r)=\infty$ if  no such constant exists). Whenever $k_{q(\cdot),p(\cdot)}(r)<\infty$, equation \eqref{MZ p variable} is equivalent to saying that the vector-valued extension of the operator $T$, given by
$$
\tilde{T}((f_k)_k)=(T(f_k))_k,
$$
is bounded from $L^{q(\cdot)}(\mu, \ell^r)$ to $L^{p(\cdot)}(\nu, \ell^r)$.  In Theorem~\ref{teoDefJunVariable} we prove for $p(\cdot)\in \mathcal{P}_b(\Omega_1,\nu)$, $q(\cdot)\in \mathcal{P}_b(\Omega_2,\mu)$ (where $\nu$ and $\mu$ are non-atomic measures) and $1\leq r\leq\infty$,
that $k_{q(\cdot), p(\cdot)}(r)<\infty$ if and only if $r \in I(p_-,q_+)$. Here, $I(p_-,q_+)$ is the interval defined in the statement of Theorem~\ref{teoDefJun}.
To do this, we follow ideas in \cite{DefJun, GasMal} to obtain a kind of monotonicity and duality properties for the constants $k_{q(\cdot),p(\cdot)}(r)$ that are the key to prove the ``if'' part of our main result. The ``only if'' part follows essentially by restricting the exponents $p(\cdot)$ and $q(\cdot)$ to subsets (of $\Omega_1$ and $\Omega_2$, respectively) where they  take values \emph{arbitrarily close} to $p_-$ and $q_+$, respectively.

As a direct application of Theorem~\ref{teoDefJunVariable} and the known relation between vector-valued inequalities and factorization theorems, we obtain in Corollary~\ref{application weighted ineq} some weighted inequalities for \emph{every} linear operator $T\colon L^{q(\cdot)}(\Omega_2, \mu)\to L^{p(\cdot)}(\Omega_1, \nu)$. For instance, we prove that if $2\leq r<\min\{p_-, q_-\}$, then given a weight $u\in L^{\frac{p(\cdot)}{p(\cdot)-r}}(\Omega_1, \nu)$ there exist a weight $U\in  L^{\frac{q(\cdot)}{q(\cdot)-r}}(\Omega_2, \mu)$ and a constant $K>0$ such that
\begin{equation*}
\int_{\Omega_1} |T(f)(x)|^r u(x)\, d\nu(x) \leq K \int_{\Omega_2} |f(x)|^r U(x)\, d\mu(x)
\end{equation*}
holds for every $f\in L^{q(\cdot)}(\Omega_2, \mu)$. We obtain an analogous result when $\max\{p_+, q_+\}< r\leq 2$.
In Corollary~\ref{vector valued fractional} we derive some vector-valued inequalities for fractional operators with rough kernel of variable order $\alpha(\cdot)$. These inequalities follow easily from Theorem~\ref{teoDefJunVariable} and the boundedness of the mentioned operators obtained in \cite{RafSam}.

Finally, in Theorem~\ref{teoDefJunAtomic} and Remark~\ref{ejemplos borde}
we see  that, for measure spaces $(\Omega_1, \nu)$ and $(\Omega_2,\mu)$ with atoms, the range of $r$'s for which the Marcinkiewicz-Zygmund inequalities hold can be larger that the one given in Theorem~\ref{teoDefJunVariable}.

\subsection{Structure of the article}
The article is organized as follows.
In Section~\ref{duality monotonicity} we prove the mentioned properties of the constants $k_{q(\cdot),p(\cdot)}(r)$.
In Proposition~\ref{duality formula variable} we prove a ``duality'' formula which is useful to prove decreasing and increasing properties of $k_{q(\cdot),p(\cdot)}(r)$ in the variables $p(\cdot), q(\cdot)$ and $r$. These results are stated in Theorem~\ref{decreasing and increasing}.
In Section~\ref{MaZy variable} we extend Theorem~\ref{teoDefJun} to the variable setting. The main result of this section (and of the article) is stated in Theorem~\ref{teoDefJunVariable}.
Section~\ref{applications} is devoted to the applications of Theorem~\ref{teoDefJunVariable} and in Section~\ref{Oliver Atom} we return to Marcinkiewicz-Zygmund type inequalities in the context of measure spaces with atoms.

All the Banach spaces considered are real (although the results can be extended to the complex case) and the measure spaces $(\Omega, \nu)$ are $\sigma$-finite. The notation is standard; we refer the reader to \cite{CruFio, DieHarHasRuz} as general references on variable Lebesgue spaces and \cite{CarMazOmb, DefJun, GasMal} as references on Marcinkiewicz-Zygmund inequalities.

\section{Monotonicity and duality properties of the constants $k_{q(\cdot),p(\cdot)}(r)$}\label{duality monotonicity}

The main goal of this section is to prove the following theorem on the increasing/decreasing dependence of $k_{q(\cdot),p(\cdot)}(r)$ on the parameters $q(\cdot),p(\cdot)$ and $r$.


\begin{theorem}\label{decreasing and increasing} \label{increasing in r}
	Let $p(\cdot), p_1(\cdot),p_2(\cdot)\in \mathcal{P}_b(\Omega_1, \nu)$ and  $q_1(\cdot), q_2(\cdot)\in \mathcal{P}_b(\Omega_2, \mu)$.  For arbitrary  $q(\cdot)\in \mathcal{P}(\Omega_2, \mu)$ and  $1\leq r < \infty$, the following assertions hold.
\begin{enumerate}[label=(\roman*)]	
\item If $1\leq r < s\leq 2$, then $k_{q(\cdot),p(\cdot)}(s)\leq k_{q(\cdot),p(\cdot)}(r)$.
\item If $2\leq r < s<\infty$ then  $k_{q(\cdot),p(\cdot)}(r)\leq 2^4\, k_{q(\cdot),p(\cdot)}(s)$.
\item If $p_2(\cdot)\leq p_1(\cdot)$ almost everywhere, then $k_{q(\cdot),p_1(\cdot)}(r)\le (2^3+1) \, k_{q(\cdot),p_2(\cdot)}(r)$.
\item If $q_1(\cdot)\leq q_2(\cdot)$ almost everywhere, then $k_{q_1(\cdot),p(\cdot)}(r)\le (2^7+2^4) \, k_{q_2(\cdot),p(\cdot)}(r)$.
\end{enumerate}	
\end{theorem}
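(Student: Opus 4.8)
The four assertions split into two groups. I would prove (i) and (iii) directly, and then obtain (ii) and (iv) formally by invoking the duality formula of Proposition~\ref{duality formula variable} twice; recall that it relates $k_{q(\cdot),p(\cdot)}(r)$ to $k_{p'(\cdot),q'(\cdot)}(r')$ up to a universal multiplicative constant, where $p'(\cdot)$ denotes the pointwise conjugate exponent.

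For part (i) the plan is to realize the $\ell^s$-norm as an $L^r$-average of a random sum. Fix a probability space $(\Omega_0,P)$ carrying independent symmetric $s$-stable random variables $(\theta_k)_k$ (Gaussian when $s=2$), normalized so that $\sum_k a_k\theta_k$ has the law of $\bigl(\sum_k|a_k|^s\bigr)^{1/s}\theta_1$; since $1\le r<s\le 2$ we have $c:=\|\theta_1\|_{L^r(P)}\in(0,\infty)$. Applying this identity pointwise in $x$, for any finite family $(f_k)$ and any $T$ one gets
$$\Bigl(\sum_k|Tf_k(x)|^s\Bigr)^{1/s}=c^{-1}\Bigl(\int_{\Omega_0}\bigl|T\bigl({\textstyle\sum_k}\theta_k(\omega)f_k\bigr)(x)\bigr|^{r}\,dP(\omega)\Bigr)^{1/r},$$
together with the analogous identity without $T$ for the family $(f_k)$ itself. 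Taking $L^{p(\cdot)}$- and $L^{q(\cdot)}$-norms, the statement reduces to the fact that the $\ell^r$-valued Marcinkiewicz--Zygmund inequality self-improves, \emph{with the same constant} $k_{q(\cdot),p(\cdot)}(r)$, to an $L^r(\Omega_0,P)$-valued inequality. I would prove this by approximating $\omega\mapsto\sum_k\theta_k(\omega)f_k$ by simple functions $\sum_j\mathbf 1_{A_j}(\omega)g_j$, using the identity
$$\Bigl(\int_{\Omega_0}\bigl|{\textstyle\sum_j}\mathbf 1_{A_j}(\omega)\,Tg_j\bigr|^{r}\,dP(\omega)\Bigr)^{1/r}=\Bigl(\sum_j\bigl|T\bigl(P(A_j)^{1/r}g_j\bigr)\bigr|^{r}\Bigr)^{1/r},$$
applying the finite $\ell^r$-inequality to the family $\bigl(P(A_j)^{1/r}g_j\bigr)_j$, and passing to the limit by dominated convergence in the variable Lebesgue norms. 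This yields $k_{q(\cdot),p(\cdot)}(s)\le k_{q(\cdot),p(\cdot)}(r)$.

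For part (iii), put $\tfrac1{s(x)}:=\tfrac1{p_2(x)}-\tfrac1{p_1(x)}\ge 0$. Given $T\colon L^{q(\cdot)}(\mu)\to L^{p_1(\cdot)}(\nu)$ and a weight $w\ge 0$, the generalized H\"older inequality for variable exponents shows that the multiplication operator $T_wf:=w\cdot Tf$ maps $L^{q(\cdot)}(\mu)$ into $L^{p_2(\cdot)}(\nu)$ with $\|T_w\|\lesssim\|w\|_{L^{s(\cdot)}(\nu)}\|T\|$. Applying the $\ell^r$-inequality for the pair $(q(\cdot),p_2(\cdot))$ to $T_w$ and a family $(f_k)$, and writing $h:=\bigl(\sum_k|Tf_k|^r\bigr)^{1/r}$, I obtain
$$\|w\,h\|_{L^{p_2(\cdot)}(\nu)}\lesssim k_{q(\cdot),p_2(\cdot)}(r)\,\|w\|_{L^{s(\cdot)}(\nu)}\,\|T\|\,\Bigl\|\Bigl(\sum_k|f_k|^r\Bigr)^{1/r}\Bigr\|_{L^{q(\cdot)}(\mu)}.$$
Taking the supremum over $w$ with $\|w\|_{L^{s(\cdot)}(\nu)}\le 1$ and applying the converse of H\"older's inequality in variable Lebesgue spaces, that is $\|h\|_{L^{p_1(\cdot)}(\nu)}\lesssim\sup\bigl\{\|wh\|_{L^{p_2(\cdot)}(\nu)}:\|w\|_{L^{s(\cdot)}(\nu)}\le 1\bigr\}$ (valid since $\tfrac1{p_1}=\tfrac1{p_2}-\tfrac1s$), bounds $\|h\|_{L^{p_1(\cdot)}(\nu)}$ and gives $k_{q(\cdot),p_1(\cdot)}(r)\le(2^3+1)\,k_{q(\cdot),p_2(\cdot)}(r)$ once the universal constants are tracked.

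Finally, for part (ii), since $2\le r<s<\infty$ we have $s'<r'\le 2$, so part (i) applied with source exponent $p'(\cdot)$ and target exponent $q'(\cdot)$ gives $k_{p'(\cdot),q'(\cdot)}(r')\le k_{p'(\cdot),q'(\cdot)}(s')$; sandwiching this between two uses of the duality formula yields $k_{q(\cdot),p(\cdot)}(r)\le 2^4 k_{q(\cdot),p(\cdot)}(s)$, with the degenerate values of $q(\cdot)$ (namely $q_-=1$ or $q_+=\infty$) handled by a direct elementary reduction. For part (iv), $q_1(\cdot)\le q_2(\cdot)$ forces $q_1'(\cdot)\ge q_2'(\cdot)$, so part (iii) applied with source exponent $p'(\cdot)$ and target exponents $q_2'(\cdot)\le q_1'(\cdot)$ gives $k_{p'(\cdot),q_1'(\cdot)}(r')\le(2^3+1)k_{p'(\cdot),q_2'(\cdot)}(r')$, and two more uses of the duality formula give $k_{q_1(\cdot),p(\cdot)}(r)\le 2^4(2^3+1)k_{q_2(\cdot),p(\cdot)}(r)=(2^7+2^4)k_{q_2(\cdot),p(\cdot)}(r)$. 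The main obstacles I anticipate are: transferring the constant in part (i) from finite $\ell^r$-sums to the continuous $L^r(\Omega_0)$-average with \emph{no loss}, which forces a careful approximation argument in the merely measurable variable exponent norms; and, in part (iii), the converse H\"older inequality together with the behaviour on the set $\{p_1=p_2\}$ (where $s=\infty$) on a possibly non-finite $\nu$, which constrains the admissible choices of the weight $w$. Once (i) and (iii) are in place, parts (ii) and (iv) are purely formal.
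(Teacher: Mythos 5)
Your proposal follows essentially the same route as the paper: part (i) via $s$-stable random variables and the transference of the finite $\ell^r$-inequality to an $L^r$-average (the paper's Lemmas \ref{lemma_of_inequality_with_integrals} and \ref{lemma_of_integration}), part (iii) via a multiplication operator combined with H\"older and its converse (which the paper implements by explicitly factoring the dual test function $g=|g|^{\alpha}|g|^{\beta}\mathrm{sg}(g)$ and splitting $\Omega_1$ into $\{p_2<p_1\}$ and its complement to get the constant $2^3+1$), and parts (ii) and (iv) by sandwiching between two applications of Proposition \ref{duality formula variable}, exactly as in the paper. The obstacles you flag (the no-loss passage to the continuous average, and the set $\{p_1=p_2\}$ where $s=\infty$) are precisely the points the paper handles by citing \cite{GasMal} and by the domain-splitting argument, respectively, so the plan is sound.
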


As a first step, we show a duality property for the constants $k_{q(\cdot),p(\cdot)}(r)$ which we feel is interesting on its own and will be useful also in Section \ref{MaZy variable}.
Given $p\colon \Omega \to [1,\infty]$ we define the \emph{conjugate exponent} $p'\colon \Omega \to [1,\infty]$ as
$$
\frac{1}{p(x)}+\frac{1}{p'(x)}=1.
$$
For constant exponents, it is known that $k_{q,p}(r)=k_{p', q'}(r')$ for every $1\leq p, q, r \leq \infty$ (see \cite{DefJun, GasMal}). When $1<p,q<\infty$, this follows easily from the fact that $L^p(\Omega, \nu)$ and $L^q(\Omega, \nu)$ are isometrically isomorphic to $L^{p'}(\Omega, \nu)$ and $L^{q'}(\Omega, \nu)$, respectively.
We cannot expect such an  equality in the variable case since the isomorphism $(L^{p(\cdot)}(\Omega, \nu))'\cong L^{p'(\cdot)}(\Omega, \nu)$ is not necessarily isometric. Moreover, when $p_+=\infty$ (even if $p(x)<\infty$ for $\nu$-a.e $x\in \Omega$), there is no isomorphism between $(L^{p(\cdot)}(\Omega, \nu))'$ and $L^{p'(\cdot)}(\Omega, \nu)$ (see \cite[Corollary~2.7]{KovRak}). Let us define the norm
$$
\|f\|_{L^{p(\cdot)}(\nu)}'=\sup \int_\Omega |f(x)g(x)|d\nu(x),
$$
where the supremum is taken over all $g\in L^{p'(\cdot)}(\Omega, \nu)$ with $\|g\|_{L^{p'(\cdot)}(\nu)}\leq 1$. It is known that
\begin{equation}\label{normas equivalentes}
\frac{1}{2}\|f\|_{L^{p(\cdot)}(\nu)} \leq \|f\|_{L^{p(\cdot)}(\nu)}' \leq 2 \|f\|_{L^{p(\cdot)}(\nu)}
\end{equation}
for every $f\in L^{p(\cdot)}(\Omega, \nu)$ and every $p\in \mathcal{P}(\Omega, \nu)$. It is clear then that
$$
\Phi_g(f)=\int_{\Omega}f(x)g(x)d\nu(x)
$$
is a linear functional in $(L^{p(\cdot)}(\nu))'$ whenever $g\in L^{p'(\cdot)}(\nu)$. As we already mentioned, in  \cite{KovRak} the authors prove that the map $g\mapsto \Phi_g$ is an isomorphism if and only if $p_+<\infty$. Consequently, $L^{p(\cdot)}(\nu)$ is reflexive if and only if $1<p_-\leq p_+<\infty$.

\begin{proposition}\label{duality formula variable}
	Let $p(\cdot)\in \mathcal{P}_b(\Omega_1,\nu)$, $q(\cdot)\in \mathcal{P}_b(\Omega_2,\mu)$ and $1\leq r\leq\infty$. Then,
$$k_{p'(\cdot),q'(\cdot)}(r') \le 4 \,k_{q(\cdot),p(\cdot)}(r).$$
\end{proposition}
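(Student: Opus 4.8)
The plan is to mimic the classical duality argument $k_{q,p}(r)=k_{p',q'}(r')$ (valid isometrically for constant exponents), paying the price of the constant $4$ because of the non-isometric nature of the duality $(L^{p(\cdot)})'\cong L^{p'(\cdot)}$, encoded in the two-sided estimate \eqref{normas equivalentes}. Since $p(\cdot)\in\mathcal P_b(\Omega_1,\nu)$ and $q(\cdot)\in\mathcal P_b(\Omega_2,\mu)$, both spaces are reflexive and, crucially, every $S\colon L^{q'(\cdot)}(\Omega_2,\mu)\to L^{p'(\cdot)}(\Omega_1,\nu)$ has an adjoint $S^{*}\colon (L^{p'(\cdot)})'\to (L^{q'(\cdot)})'$ which, via the isomorphisms $g\mapsto \Phi_g$, we identify with a bounded operator $T\colon L^{p(\cdot)}(\Omega_1,\nu)\to L^{q(\cdot)}(\Omega_2,\mu)$; the point is that $T$ and $S$ have the \emph{same norm up to universal constants} (again by \eqref{normas equivalentes}), and conversely every such $T$ arises this way from some $S$ with comparable norm.

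First I would set up the pairing carefully. Fix a bounded $S\colon L^{q'(\cdot)}(\Omega_2,\mu)\to L^{p'(\cdot)}(\Omega_1,\nu)$ and a finite sequence $(g_k)_{k=1}^N\subset L^{q'(\cdot)}(\Omega_2,\mu)$; we want to bound $\big\|(\sum_k|S(g_k)|^{r'})^{1/r'}\big\|_{L^{p'(\cdot)}(\nu)}$. Using the norming relation, this quantity is comparable (within a factor $2$) to a supremum of integrals $\int_{\Omega_1}(\sum_k|S(g_k)|^{r'})^{1/r'}\,h\,d\nu$ over $h\in L^{p(\cdot)}(\Omega_1,\nu)$ with $\|h\|\le 1$. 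By Hölder in $\ell^{r}$–$\ell^{r'}$ pointwise in $x$, I would introduce a sequence $(h_k)_k$ with $(\sum_k|h_k|^r)^{1/r}\le h$ that realizes the $\ell^{r'}$ norm, so that the integral becomes $\sum_k\int_{\Omega_1} S(g_k)\,h_k\,d\nu$. Now I bring in the adjoint: $\sum_k\int_{\Omega_1}S(g_k)h_k\,d\nu=\sum_k\int_{\Omega_2} g_k\, T(h_k)\,d\mu$ where $T\colon L^{p(\cdot)}(\Omega_1,\nu)\to L^{q(\cdot)}(\Omega_2,\mu)$ is (the representation of) $S^{*}$. Applying Hölder again — first the $\ell^r$–$\ell^{r'}$ inequality in $k$ pointwise in the variable of $\Omega_2$, then the integral Hölder for the pair $L^{q'(\cdot)},L^{q(\cdot)}$ — this is at most $\big\|(\sum_k|g_k|^{r'})^{1/r'}\big\|_{L^{q'(\cdot)}(\mu)}\cdot\big\|(\sum_k|T(h_k)|^{r})^{1/r}\big\|'_{L^{q(\cdot)}(\mu)}$, and the second factor is controlled by the Marcinkiewicz–Zygmund constant for $T$: it is $\le 2\,k_{q(\cdot),p(\cdot)}(r)\,\|T\|\,\big\|(\sum_k|h_k|^r)^{1/r}\big\|_{L^{p(\cdot)}(\nu)}\le 2\,k_{q(\cdot),p(\cdot)}(r)\,\|T\|$, using \eqref{normas equivalentes} once more and $\|h\|\le 1$. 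Finally $\|T\|=\|S^{*}\|$ is comparable to $\|S\|$; tracking all the factors of $2$ from \eqref{normas equivalentes} (and noting the Hölder steps are constant-free) is what produces the overall constant $4$ after optimizing over $h$ and over the auxiliary sequences. Taking the infimum over all $S$ gives $k_{p'(\cdot),q'(\cdot)}(r')\le 4\,k_{q(\cdot),p(\cdot)}(r)$.

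I would also need to check the degenerate range: if $r=1$ then $r'=\infty$ and the $\ell^\infty$ norm $\sup_k|S(g_k)|$ must be handled by reducing to a single function (or by a standard limiting/normalization argument), while $r=\infty$, $r'=1$ is symmetric; in both cases the argument above goes through verbatim once the $\ell^r$–$\ell^{r'}$ Hölder pairing is interpreted correctly, and indeed these endpoint cases are typically easier since one of the vector norms collapses. The main obstacle, and the only genuinely delicate point, is the bookkeeping of constants: one must make sure that every passage between $\|\cdot\|_{L^{p(\cdot)}}$ and $\|\cdot\|'_{L^{p(\cdot)}}$ (and likewise for $q(\cdot)$) is used in the direction that costs a factor $2$ rather than accumulating needlessly, and that the identification of $S^{*}$ with an operator $T$ on the ``un-primed'' spaces is done through $g\mapsto\Phi_g$ with the correct norm comparison, so that the four factors of $2$ coalesce to the single factor $4$ claimed. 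A secondary technical point is that one must verify that as $S$ ranges over all bounded operators $L^{q'(\cdot)}\to L^{p'(\cdot)}$, the associated $T=S^{*}$ ranges (up to comparable norm) over all bounded operators $L^{p(\cdot)}\to L^{q(\cdot)}$ — this is exactly where reflexivity of the spaces in $\mathcal P_b$ is used — so that the supremum defining $k_{p'(\cdot),q'(\cdot)}(r')$ is genuinely dominated by the one defining $k_{q(\cdot),p(\cdot)}(r)$.
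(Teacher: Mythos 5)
Your argument is at bottom the same transposition argument as the paper's: pass to the adjoint, apply the Marcinkiewicz--Zygmund inequality on the unprimed spaces, and dualize back, with \eqref{normas equivalentes} supplying the non-isometric constants. The one real difference in execution is that where the paper transposes the vector-valued extension $\widetilde{T^*}$ abstractly and invokes the duality $(L^{p(\cdot)}(\mu,\ell^r))'\cong L^{p'(\cdot)}(\mu,\ell^{r'})$ from \cite{CheXu}, you unwind that duality by hand: test against $h$ with $\|h\|_{L^{p(\cdot)}}\le 1$, realize the pointwise $\ell^{r'}$--$\ell^{r}$ duality by a measurable selection $(h_k)$, and apply H\"older twice. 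That is a legitimate, self-contained substitute, and it does cover the endpoints $r=1,\infty$ once the selection is interpreted correctly. Three caveats. First, your labels are reversed: with the paper's convention $k_{\mathrm{domain},\mathrm{codomain}}$, your $S\colon L^{q'(\cdot)}\to L^{p'(\cdot)}$ and $T=S^*\colon L^{p(\cdot)}\to L^{q(\cdot)}$ prove $k_{q'(\cdot),p'(\cdot)}(r')\lesssim k_{p(\cdot),q(\cdot)}(r)$, i.e.\ the statement with $p$ and $q$ (and $\Omega_1,\Omega_2$) interchanged; this is equivalent after relabelling, but as written it is not the stated inequality, and the constant $k_{q(\cdot),p(\cdot)}(r)$ you invoke does not govern your $T$. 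Second, the bookkeeping does not obviously yield $4$: besides the two explicit factors of $2$, the identification of $S^*$ with a concrete operator between $L^{p(\cdot)}$ and $L^{q(\cdot)}$ costs its own factors of $2$ from \eqref{normas equivalentes} (the paper absorbs exactly one such factor into $\|T^*\|\le 2\|T\|$); an honest count along your route may give $8$ or $16$, which is harmless for the applications but is not the constant you claim. Third, the closing ``surjectivity'' worry is unnecessary: to bound $k_{p'(\cdot),q'(\cdot)}(r')$ you only need every operator in the primed class to have an adjoint lying in the unprimed class, which you already arranged; you do not need $S\mapsto S^*$ to be onto.
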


\begin{proof}
	Take a linear operator $T:L^{p'(\cdot)}(\Omega_1, \nu)\longrightarrow L^{q'(\cdot)}(\Omega_2, \mu)$ and consider its adjoint operator $T^*:(L^{q'(\cdot)}(\Omega_2, \mu))'\longrightarrow (L^{p'(\cdot)}(\Omega_1, \nu))'$. Since, by reflexivity, the spaces $(L^{p'(\cdot)}(\Omega_1, \nu))'$ and $(L^{q'(\cdot)}(\Omega_2, \mu))'$ are isomorphic to $L^{p(\cdot)}(\Omega_1, \nu)$ and $L^{q(\cdot)}(\Omega_2, \mu)$, respectively,  we think of $T^*$ as an operator from $L^{q(\cdot)}(\Omega_2, \mu)$ to $L^{p(\cdot)}(\Omega_1, \nu)$ with norm $\left\| T^* \right\|\leq 2 \left\| T \right\|$.
Now, the natural vector-valued extension of $T^*$, which we call $\widetilde{T^*}$, is bounded from $L^{q(\cdot)}(\mu, \ell^r)$ to $L^{p(\cdot)}(\nu, \ell^r)$ with
	\begin{equation*}
		 \|\widetilde{T^*} \|\leq k_{q(\cdot),p(\cdot)}(r) \left\| T^* \right\| \leq k_{q(\cdot),p(\cdot)}(r)\, 2 \left\| T \right\|.
	\end{equation*}
	Now we transpose again to obtain the bounded linear operator
	$(\widetilde{T^*})^*:(L^{p(\cdot)}(\mu, \ell^r))'\longrightarrow (L^{q(\cdot)}(\nu, \ell^r))'$. Since $p_+<\infty$ and $q_+<\infty$, by
 the duality formula for valued Bochner-Lebesgue spaces with variable exponent \cite[Theorem~2]{CheXu},
we can think the operator $(\widetilde{T^*})^*$ defined from $L^{p'(\cdot)}(\mu, \ell^{r'})$ to $L^{q'(\cdot)}(\nu, \ell^{r'})$, with its norm at most doubled. It is standard to check that the operator $(\widetilde{T^*})^*$ is the natural vector-valued extension of $T$.
  This proves that every operator $T:L^{p'(\cdot)}(\Omega_1, \nu)\longrightarrow L^{q'(\cdot)}(\Omega_2, \mu)$ has a bounded vector-valued extension $\widetilde{T}$, with $\|\widetilde{T}\|\leq 4\, k_{q(\cdot),p(\cdot)}(r) \|T\|$. As a consequence, $k_{p'(\cdot),q'(\cdot)}(r')\le 4\, k_{q(\cdot),p(\cdot)}(r)<\infty$.
\end{proof}

Now we state the following lemma. The proof follows line by line that of its constant exponent version, which can be seen in \cite[Equation (8)]{GasMal}
\begin{lemma}\label{lemma_of_inequality_with_integrals}
	Let $p(\cdot)\in \mathcal{P}(\Omega_1, \nu)$, $q(\cdot)\in \mathcal{P}(\Omega_2, \mu)$ and $1\leq r < \infty$. Then,
	\begin{equation*}
		\left\| \left(\int_{0}^{1}\Big|\sum_{k=1}^{N}T(f_k)g_k(t)\Big|^rdt\right)^{\frac{1}{r}} \right\|_{L^{p(\cdot)}(\Omega_1, \nu)}
		\leq
		k_{q(\cdot),p(\cdot)}(r)\left\| T \right\|\left\| \left(\int_{0}^{1}\Big|\sum_{k=1}^{N}f_kg_k(t)\Big|^rdt\right)^{\frac{1}{r}} \right\|_{L^{q(\cdot)}(\Omega_2, \mu)}
	\end{equation*}
	for every bounded linear operator $T:L^{q(\cdot)}(\Omega_2, \mu)\to L^{p(\cdot)}(\Omega_1, \nu)$, every $\{f_k\}_{k=1}^N\subset L^{q(\cdot)}(\Omega_2, \mu)$ and $\{g_k\}_{k=1}^N\subset L^{r}[0,1]$ and every $N\in \mathbb{N}$.
\end{lemma}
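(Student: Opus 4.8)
The plan is to follow the constant-exponent argument of \cite[Equation~(8)]{GasMal}: prove the inequality first when the $g_k$ are simple functions, where it reduces to the very definition of $k_{q(\cdot),p(\cdot)}(r)$, and then pass to arbitrary $g_k\in L^r[0,1]$ by density. We may assume $k_{q(\cdot),p(\cdot)}(r)<\infty$; since the validity of \eqref{MZ p variable} with a given constant is stable under letting that constant decrease to the infimum, $k_{q(\cdot),p(\cdot)}(r)$ itself is admissible in \eqref{MZ p variable}.

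\emph{Step 1: simple $g_k$.} If each $g_k$ is simple, passing to a common refinement of the underlying partitions lets us write $g_k=\sum_{j=1}^{M}c_{k,j}\mathbf{1}_{E_j}$ with the $E_j\subset[0,1]$ pairwise disjoint, $\bigcup_j E_j=[0,1]$ and $\delta_j:=|E_j|>0$. The key point is that, by linearity of $T$, on $E_j$ one has $\sum_k T(f_k)g_k(t)=T\big(\sum_k c_{k,j}f_k\big)$, so that, putting $h_j:=\delta_j^{1/r}\sum_k c_{k,j}f_k\in L^{q(\cdot)}(\Omega_2,\mu)$,
$$
\int_0^1\Big|\sum_{k=1}^N T(f_k)g_k(t)\Big|^r dt=\sum_{j=1}^M|T(h_j)|^r,\qquad \int_0^1\Big|\sum_{k=1}^N f_kg_k(t)\Big|^r dt=\sum_{j=1}^M|h_j|^r .
$$
Applying \eqref{MZ p variable} to the finite family $\{h_j\}_{j=1}^M$ then gives the inequality of the statement directly.

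\emph{Step 2: density (this is where $r<\infty$ is used).} Given arbitrary $g_k\in L^r[0,1]$, I would approximate them by simple functions $g_k^{(n)}\to g_k$ in $L^r[0,1]$ and let $n\to\infty$ in the inequality of Step~1 for the $g_k^{(n)}$. What has to be checked is that each side depends continuously on $(g_k)_k$: for fixed $\phi_1,\dots,\phi_N$ in a variable Lebesgue space, Minkowski's inequality in $L^r[0,1]$ gives, pointwise a.e.,
$$
\left|\Big(\int_0^1\Big|\sum_k\phi_k g_k^{(n)}(t)\Big|^r dt\Big)^{1/r}-\Big(\int_0^1\Big|\sum_k\phi_k g_k(t)\Big|^r dt\Big)^{1/r}\right|\le\sum_{k=1}^N|\phi_k|\,\big\|g_k^{(n)}-g_k\big\|_{L^r[0,1]},
$$
whose variable Lebesgue norm is at most $\sum_k\|\phi_k\|\,\|g_k^{(n)}-g_k\|_{L^r[0,1]}\to0$; the analogous bound $\big(\int_0^1|\sum_k\phi_k g_k(t)|^r dt\big)^{1/r}\le\sum_k|\phi_k|\,\|g_k\|_{L^r[0,1]}$ shows these quantities lie in the relevant variable Lebesgue space. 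Choosing $\phi_k=T(f_k)$ for the left-hand side and $\phi_k=f_k$ for the right-hand side completes the proof.

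The argument is routine and the variable exponent plays no distinguished role; the only point requiring any care is the limit in Step~2, for which the displayed pointwise bound supplies both the finiteness and the norm-continuity needed. In that sense the constant-exponent proof of \cite{GasMal} transfers verbatim, as asserted.
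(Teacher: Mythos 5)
Your proof is correct and is essentially the argument the paper intends: the paper itself only says the lemma "follows line by line" the constant-exponent proof of \cite[Equation (8)]{GasMal}, which is exactly the two-step scheme you carry out (reduction to simple $g_k$ via a common partition, where the inequality is the definition of $k_{q(\cdot),p(\cdot)}(r)$ applied to the functions $h_j=\delta_j^{1/r}\sum_k c_{k,j}f_k$, followed by an $L^r[0,1]$-density passage justified by the pointwise Minkowski bound). Nothing further is needed.
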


We also state the following well known  result, which can be found in \cite[21.1.3]{Pie}.
	\begin{lemma}\label{lemma_of_integration}
		For $0<s<r<2$ or $r=2$ and $0<s<\infty$, let  $\{ w_k \}_k$ be  a sequence of $r$-stable independent random variables defined on $\left[ 0,1 \right]$ and let $c_{r,s}$ be its $s$-th moment. Then,
		\begin{equation*}
			\left( \int_{0}^{1} \left| \sum_{k=1}^{N}a_kw_k(t) \right|^sdt  \right)^{\frac{1}{s}}=c_{r,s}\left( \sum_{k=1}^{N} \left| a_k \right|^r \right)^{\frac{1}{r}}
		\end{equation*}
		for every sequence $\{a_k\}_{k=1}^N\subset \mathbb{R}$ and any $N\in \mathbb{N}$.
	\end{lemma}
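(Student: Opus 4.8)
The plan is to obtain the identity from the defining property of $r$-stable random variables together with a single moment computation; this is essentially the argument of \cite[21.1.3]{Pie}, which I will only sketch.

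First I would recall the key property of $r$-stable variables: if $w_1,\dots,w_N$ are independent copies of a (strictly symmetric) $r$-stable variable $w$, then for every $(a_k)_{k=1}^N\subset\R$ the sum $\sum_{k=1}^N a_k w_k$ is equidistributed with $\big(\sum_{k=1}^N|a_k|^r\big)^{1/r}\,w$. With the standard normalization this follows at once from characteristic functions: the characteristic function of $w$ is $t\mapsto e^{-|t|^r}$, so that of $\sum_k a_k w_k$ is $t\mapsto \prod_k e^{-|a_k t|^r}=e^{-(\sum_k|a_k|^r)|t|^r}$, which is the characteristic function of $\big(\sum_k|a_k|^r\big)^{1/r}w$; the case $r=2$ is the Gaussian one.

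Next I would record that, under the stated hypotheses ($0<s<r<2$, or $r=2$ and $0<s<\infty$), the $s$-th absolute moment $c_{r,s}^s=\int_0^1|w(t)|^s\,dt$ is finite: for $r<2$ the $r$-stable tails decay like $|x|^{-1-r}$, so the $s$-th moment is finite exactly when $s<r$, while for $r=2$ the variable is Gaussian with all moments finite. This is the only place where the restriction on $s$ enters. Combining the two facts,
\begin{equation*}
\int_0^1\Big|\sum_{k=1}^N a_k w_k(t)\Big|^s dt
=\int_0^1\Big|\Big(\sum_{k=1}^N|a_k|^r\Big)^{1/r} w(t)\Big|^s dt
=\Big(\sum_{k=1}^N|a_k|^r\Big)^{s/r}\int_0^1|w(t)|^s dt
=\Big(\sum_{k=1}^N|a_k|^r\Big)^{s/r} c_{r,s}^s,
\end{equation*}
and taking $s$-th roots gives the asserted equality.

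There is no genuine obstacle here: the statement is classical, and the only delicate point — the finiteness of $c_{r,s}$, which is precisely what forces the hypothesis on $s$ — is standard. Accordingly, the natural course is to cite \cite[21.1.3]{Pie} rather than reproduce the details.
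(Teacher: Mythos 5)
Your proposal is correct and matches the paper, which simply states this as a well-known result and cites \cite[21.1.3]{Pie} without giving a proof; your sketch (equidistribution of $\sum_k a_k w_k$ with $\big(\sum_k|a_k|^r\big)^{1/r}w$ via characteristic functions, plus finiteness of the $s$-th moment under the stated restriction on $s$) is exactly the standard argument behind that citation.
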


Now we are ready to prove the main result of this section. For the dependence on $r$ we follow the ideas in the proof of \cite[Theorem 1 (c)]{GasMal}, where this is proved for constant exponents.
\begin{proof}[Proof of Theorem \ref{increasing in r}]
Le us prove item $(i)$. By Lemmas~\ref{lemma_of_integration} and \ref{lemma_of_inequality_with_integrals} we have
	\begin{eqnarray*}
		c_{s,r}\left\|\left( \sum_{k=1}^{N} \left| T(f_k) \right|^s \right)^{\frac{1}{s}}\right\|_{L^{p(\cdot)}(\Omega_1, \nu)}
		&=&
		\left\|\left( \int_{0}^{1} \left| \sum_{k=1}^{N}T(f_k)w_k(t) \right|^rdt  \right)^{\frac{1}{r}}\right\|_{L^{p(\cdot)}(\Omega_1, \nu)}
		\\
		&\leq&
		k_{q(\cdot),p(\cdot)}(r)\left\| T \right\|\left\| \left(\int_{0}^{1}\left|\sum_{k=1}^{N}f_kw_k(t)\right|^rdt\right)^{\frac{1}{r}} \right\|_{L^{q(\cdot)}(\Omega_2, \mu)}
		\\
		&=&
		k_{q(\cdot),p(\cdot)}(r)\left\| T \right\|c_{s,r}\left\| \left(\sum_{k=1}^{N}\left|f_k\right|^s\right)^{\frac{1}{s}} \right\|_{L^{q(\cdot)}(\Omega_2, \mu)}.
		\end{eqnarray*}
	Then
	\begin{equation*}
		\left\|\left( \sum_{k=1}^{n} \left| T(f_k) \right|^s \right)^{\frac{1}{s}}\right\|_{L^{p(\cdot)}(\Omega_1, \nu)}
		\leq
		k_{q(\cdot),p(\cdot)}(r)\left\| T \right\|\left\| \left(\sum_{k=1}^{N}\left|f_k\right|^s\right)^{\frac{1}{s}} \right\|_{L^{q(\cdot)}(\Omega_2, \mu)}
	\end{equation*}
	and, consequently, $k_{q(\cdot),p(\cdot)}(s)\leq k_{q(\cdot),p(\cdot)}(r)$. Item $(ii)$ follows from item $(i)$ and Proposition~\ref{duality formula variable}.

In order to prove $(iii)$, we first suppose  that $p_2(\cdot)<p_1(\cdot)$ almost everywhere, that is, the inequality is strict. Let  $T\colon L^{q(\cdot)}(\Omega_2, \mu) \to L^{p_1(\cdot)}(\Omega_1, \nu)$ be a linear operator and $(f_k)_k\subset L^{q(\cdot)}(\Omega_2, \mu)$.
By \eqref{normas equivalentes} we have
	\begin{equation}\label{dualidad}
		\left\| \left(\sum_k |T(f_k)|^r \right)^{\frac{1}{r}}\right\|_{L^{p_1(\cdot)}(\Omega_1,\nu)}
		\leq 2
		\sup_{\left\|g \right\|_{{p_1'(\cdot)}} \leq 1}\Biggl\{ \underbrace{\int_{\Omega_1} \left(\sum_k |T(f_k)|^r \right)^{\frac{1}{r}}g(x)\,d\nu(x)}_{\clubsuit} \Biggr\}.
	\end{equation}
Consider $\alpha(\cdot)=\frac{p_1'(\cdot)}{p_2(\cdot)}(1-\frac{p_2(\cdot)}{p_1(\cdot)})$ and $\beta(\cdot)=1-\alpha(\cdot)$. It is not difficult to see that $0<\alpha(\cdot),\beta(\cdot)<1$.
We write
	\begin{equation*}
		g(x)=|g(x)|\text{sg}(g(x))=|g(x)|^{\alpha(x)}|g(x)|^{\beta(x)}\text{sg}(g(x)),
	\end{equation*}
where sg denotes de sign function. Therefore,
	\begin{equation*}
		\clubsuit
		=
		\int_{\Omega_1} \left(\sum_k \Big|T(f_k)(x)|g(x)|^{\alpha(x)}\Big|^r \right)^{\frac{1}{r}}|g(x)|^{\beta(x)}\text{sg}(g(x))\,d\nu(x).
	\end{equation*}
	Now, on the one hand, let us see that
	\begin{equation*}
		T_g(f)(x)=|g(x)|^{\alpha(x)}T(f)(x)
	\end{equation*}
	defines a bounded linear operator from $L^{q(\cdot)}(\Omega_2, \mu)$ into $L^{p_2(\cdot)}(\Omega_1, \nu)$. Indeed, by H\"older's inequality for variable Lebesgue spaces (see \cite[Theorem~2.1]{KovRak}) we have
	\begin{equation*}
		\left\| T_g (f) \right\|_{L^{p_2(\cdot)}(\Omega_1, \nu)}
		\leq
		2\left\| T (f) \right\|_{L^{p_1(\cdot)}(\Omega_1, \nu)}\left\| |g|^{\alpha} \right\|_{L^{\theta(\cdot)}(\Omega_1, \nu)}
		\leq
		2 \|T\| \left\| |g|^{\alpha} \right\|_{L^{\theta(\cdot)}(\Omega_1, \nu)} \|f\|_{L^{q(\cdot)}(\Omega_2, \mu)}
	\end{equation*}
	where $\theta(\cdot)=\frac{p_2(\cdot)p_1(\cdot)}{p_1(\cdot)-p_2(\cdot)}$.
Since $\left\| g \right\|_{L^{p'_1(\cdot)}(\Omega_1, \nu)}\leq 1$ and  $\alpha(\cdot) \theta(\cdot)= p'_1(\cdot)$, we get $\left\| |g|^{\alpha} \right\|_{L^{\theta(\cdot)}(\Omega_1, \nu)}\leq 1$. Consequently, $T_g$ is bounded with $\|T_g\|\leq 2\|T\|$.

On the other hand, if $$h_g(x)=|g(x)|^{\beta(x)} \text{sg}(g(x)),$$ we can see that $h_g\in L^{p'_2(\cdot)}(\Omega_1, \nu)$ with $\left\| h_g \right\|_{L^{p'_2(\cdot)}(\Omega_1, \nu)}\leq 1$. Indeed, since $\rho_{p'_1(\cdot)}(g)\leq 1$ and $\beta(\cdot)p_2'(\cdot)=p_1'(\cdot)$, we have
	\begin{eqnarray*}
		\rho_{p'_2(\cdot)}(h_g)
		=
		\int_{\Omega_1} |g(x)|^{\beta(x)p'_2(x)}\, d\nu(x)
		=
		\int_{\Omega_1} |g(x)|^{p'_1(x)}\, d\nu(x)
		\leq
		\rho_{p'_1(\cdot)}(g)\leq 1
	\end{eqnarray*}
and, consequently,
	\begin{equation*}
		\left\| h_g \right\|_{p'_2(\cdot)}=\inf \{ \lambda>0 : \rho_{p'_2(\cdot)}(h/\lambda) \leq 1 \} \leq 1.
	\end{equation*}
Going back to \eqref{dualidad} we obtain
	\begin{eqnarray}
	\nonumber \left\| \left(\sum_k |T(f_k)|^r \right)^{\frac{1}{r}}\right\|_{L^{p_1(\cdot)}(\Omega_1, \nu)}&\leq&
		2\sup_{\left\|g \right\|_{p_1'(\cdot)} \leq 1}\Biggl\{ \int_{\Omega_1} \left(\sum_k |T(f_k)|^r \right)^{\frac{1}{r}}g(x)\,d\nu(x)\Biggr\}\\
		&=&
		2\sup_{\left\|g \right\|_{p_1'(\cdot)} \leq 1}\Biggl\{ \int_{\Omega_1} \left(\sum_k |T_g(f_k)|^r \right)^{\frac{1}{r}}h_g(x)\,d\nu(x) \Biggr\} \label{sup en g}
\end{eqnarray}
Now, for each $\left\|g \right\|_{p_1'(\cdot)}\leq 1$ we have
\begin{eqnarray*}
	\int_{\Omega_1} \left(\sum_k |T_g(f_k)|^r \right)^{\frac{1}{r}}h_g(x)\,d\nu(x)	&\leq&
		\sup_{\left\|h \right\|_{p_2'(\cdot)} \leq 1}\Biggl\{ \int_{\Omega_1} \left(\sum_k |T_g(f_k)|^r \right)^{\frac{1}{r}}h(x)\,d\nu(x) \Biggr\}\\
		&\leq& 2 \left\| \left(\sum_k |T_g(f_k)|^r \right)^{\frac{1}{r}} \right\|_{L^{p_2(\cdot)}(\Omega_1, \nu)} \quad \text{(by \eqref{normas equivalentes})}\\
		&\leq&
		2k_{q(\cdot),p_2(\cdot)}(r) \, 2 \left\| T \right\| \left\| \left(\sum_{k}|f_k |^r\right)^{\frac{1}{r}} \right\|_{L^{q(\cdot)}(\Omega_2, \mu)}.
	\end{eqnarray*}
Then, taking supremum over $\left\|g \right\|_{p_1'(\cdot)}\leq 1$ and returning to \eqref{sup en g} we get
$$
 \left\| \left(\sum_k |T(f_k)|^r \right)^{\frac{1}{r}}\right\|_{L^{p_1(\cdot)}(\Omega_1, \nu)}\leq 2^3\, k_{q(\cdot),p_2(\cdot)}(r)  \left\| T \right\| \left\| \left(\sum_{k}|f_k |^r\right)^{\frac{1}{r}} \right\|_{L^{q(\cdot)}(\Omega_2, \mu)},
$$
which proves that $k_{q(\cdot),p_1(\cdot)}(r)\leq 2^3\, k_{q(\cdot),p_2(\cdot)}(r)$.

To finish $(iii)$, we assume  $p_2(\cdot)\leq p_1(\cdot)$ almost everywhere,  consider
$
\widetilde{\Omega_1}=\{x\in \Omega_1:\,\, p_2(x)<p_1(x)\}
$
and split the operator $T\colon L^{q(\cdot)}(\Omega_2, \mu)\to L^{p_1(\cdot)}(\Omega_1, \nu)$ as follows
$$T(f)(x)=T(f)(x)\chi_{\widetilde{\Omega_1}}(x) + T(f)(x)\chi_{\widetilde{\Omega_1}^c}(x)=:T_{\widetilde{\Omega_1}}(f)(x)+ T_{\widetilde{\Omega_1}^c}(f)(x).$$
Then, we use what  we proved for the strict inequality together with Lemma~\ref{lema 1} below to get
\begin{eqnarray*}
\left\| \left(\sum_k |T(f_k)|^r \right)^{\frac{1}{r}}\right\|_{L^{p_1(\cdot)}(\Omega_1)}
&\leq&
\left\| \left(\sum_k |T_{\widetilde{\Omega_1}}(f_k)|^r \right)^{\frac{1}{r}}\right\|_{L^{p_1(\cdot)}(\widetilde{\Omega_1})}+ \left\| \left(\sum_k |T_{\widetilde{\Omega_1}^c}(f_k)|^r \right)^{\frac{1}{r}}\right\|_{L^{p_1(\cdot)}(\widetilde{\Omega_1}^c)}\\
&\leq& (2^3+1)  k_{q(\cdot), p_2(\cdot)}(r)  \|T\| \left\| \left(\sum_{k}|f_k |^r\right)^{\frac{1}{r}} \right\|_{L^{q(\cdot)}(\Omega_2)},
\end{eqnarray*}
which proves the desired statement.

To show $(iv)$, fix $p(\cdot)\in  \mathcal{P}_b(\Omega_1, \nu)$ and suppose that $q_1(\cdot)\leq q_2(\cdot)$. Since we clearly have that $q_2'(\cdot)\leq q_1'(\cdot)$, by Proposition~\ref{duality formula variable} and $(iii)$ we have
	\begin{eqnarray*}
		k_{q_1(\cdot),p(\cdot)}(r) \le 2^2 \,k_{p'(\cdot),q'_1(\cdot)}(r') \le (2^5+2^2) \, k_{p'(\cdot),q'_2(\cdot)}(r')
		\le (2^7+2^4) \, k_{q_2(\cdot),p(\cdot)}(r).
	\end{eqnarray*}
	This completes the proof.
\end{proof}

\section{Marcinkiewicz-Zygmund inequalities in variable Lebesgue spaces for non-atomic measure spaces}\label{MaZy variable}
Now we are ready to state our main theorem, which extends Theorem \ref{teoDefJun} to variable exponents defined on non-atomic measure spaces.
Recall from Theorem \ref{teoDefJun} the definition, for constant exponents $p,q$, of the intervals   $I(p,q)$:
$$I(p,q) = \left\{
\begin{array} {c l}
(q,2] & \text{if $p<q<2$,}\\
\left[2,p\right)  & \text{if $2<p<q$,}\\
\left[ \min\{2,q\}, \max\{2,p\} \right]  & \text{otherwise.}
\end{array}
\right .$$
\begin{theorem}\label{teoDefJunVariable}
	Let $(\Omega_1,\nu)$ and $(\Omega_2,\mu)$ be non-atomic measure spaces. Let $p(\cdot)\in \mathcal{P}_b(\Omega_1,\nu)$, $q(\cdot)\in \mathcal{P}_b(\Omega_2,\mu)$ and $1\leq r\leq\infty$.
	Then $k_{q(\cdot), p(\cdot)}(r)<\infty$ if and only if $r \in I(p_-,q_+)$.
Also, $k_{1, p(\cdot)}(r)<\infty$ for every $1\leq r<\infty$ and $k_{\infty, p(\cdot)}(r)<\infty$ for every $r\in I(p_-,\infty)$.
	\end{theorem}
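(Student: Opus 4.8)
Both implications are reduced to the constant‑exponent statement Theorem~\ref{teoDefJun} by means of the monotonicity and duality results of Section~\ref{duality monotonicity}; the non‑atomicity of the measures enters only when we need to realize the known constant‑exponent extremal operators inside the variable spaces. For the \emph{``if'' direction}, let $r\in I(p_-,q_+)$ (in particular $r<\infty$, as this interval is bounded). Since $p_-\le p(\cdot)$ and $q(\cdot)\le q_+$ almost everywhere, and the constant exponents $p_-$, $q_+$ lie in $\mathcal{P}_b(\Omega_1,\nu)$, $\mathcal{P}_b(\Omega_2,\mu)$, Theorem~\ref{decreasing and increasing}(iii) lets us replace $p(\cdot)$ by the constant $p_-$ and then Theorem~\ref{decreasing and increasing}(iv) (with target exponent the constant $p_-$) lets us replace $q(\cdot)$ by the constant $q_+$:
$$k_{q(\cdot),p(\cdot)}(r)\ \le\ (2^3+1)\,k_{q(\cdot),p_-}(r)\ \le\ (2^3+1)(2^7+2^4)\,k_{q_+,p_-}(r).$$
The right‑hand side is, up to a constant, at most the universal constant of Theorem~\ref{teoDefJun}, finite precisely for $r\in I(p_-,q_+)$ (here $q_+\ge q_->1$ and $p_-<\infty$). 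The two extra assertions follow the same way: for $q\equiv 1$ use only Theorem~\ref{decreasing and increasing}(iii) together with $k_{1,p_-}(r)=1$ from Theorem~\ref{teoDefJun}(i); for $q\equiv\infty$ use Theorem~\ref{decreasing and increasing}(iii) (which allows arbitrary $q(\cdot)\in\mathcal{P}(\Omega_2,\mu)$) to reduce to $k_{\infty,p_-}(r)$, finite for $r\in I(p_-,\infty)$ by Theorem~\ref{teoDefJun}.

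\textbf{The ``only if'' direction, generic range of $r$.} Suppose $k_{q(\cdot),p(\cdot)}(r)<\infty$, and fix a small $\varepsilon>0$. Put $E_\varepsilon=\{x\in\Omega_1:p(x)<p_-+\varepsilon\}$ and $F_\varepsilon=\{x\in\Omega_2:q(x)>q_+-\varepsilon\}$; by definition of $p_-$, $q_+$ these have positive measure, and the restricted measures remain non‑atomic. Extending operators on $L^{q(\cdot)}(F_\varepsilon,\mu)\to L^{p(\cdot)}(E_\varepsilon,\nu)$ by zero gives $k_{q(\cdot)|_{F_\varepsilon},\,p(\cdot)|_{E_\varepsilon}}(r)\le k_{q(\cdot),p(\cdot)}(r)$. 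Since $q(\cdot)\ge q_+-\varepsilon$ on $F_\varepsilon$ and $p(\cdot)\le p_-+\varepsilon$ on $E_\varepsilon$, Theorem~\ref{decreasing and increasing}(iii)--(iv) applied on these subsets bounds the constant‑exponent quantity $k^{(F_\varepsilon,E_\varepsilon)}_{q_+-\varepsilon,\,p_-+\varepsilon}(r)$ by a fixed multiple of $k_{q(\cdot),p(\cdot)}(r)$. As $E_\varepsilon$ and $F_\varepsilon$ contain non‑atomic subsets of finite positive measure, they carry lattice‑isometric copies of $L^{p_-+\varepsilon}[0,1]$ and $L^{q_+-\varepsilon}[0,1]$ (hence of $\ell^{p_-+\varepsilon}_n$, $\ell^{q_+-\varepsilon}_n$), through which the extremal operators of \cite{DefJun,GasMal} transfer; therefore $k^{(F_\varepsilon,E_\varepsilon)}_{q_+-\varepsilon,\,p_-+\varepsilon}(r)$ is infinite whenever $r\notin I(p_-+\varepsilon,q_+-\varepsilon)$. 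A short comparison of the intervals shows that for every $r\notin\overline{I(p_-,q_+)}$ one has $r\notin I(p_-+\varepsilon,q_+-\varepsilon)$ once $\varepsilon$ is small, forcing $k_{q(\cdot),p(\cdot)}(r)=\infty$. This settles every $r$ except the \emph{open} endpoint of $I(p_-,q_+)$ --- namely $r=q_+$ in the case $p_-<q_+<2$, or (dually) $r=p_-$ in the case $2<p_-<q_+$ --- together with $r=\infty$, which falls outside the scope of Theorem~\ref{decreasing and increasing}.

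\textbf{The endpoint cases.} Here the crude monotonicity estimates are useless: one always has $q_+\in I(p_-+\varepsilon,q_+-\varepsilon)$, so the limiting argument only recovers $\overline{I(p_-,q_+)}$. The endpoint $r=q_+$ (with $p_-<q_+<2$) must therefore be excluded by a direct construction: restricting once more to $E_\varepsilon,F_\varepsilon$ and using non‑atomicity, one transfers to $L^{q(\cdot)}(F_\varepsilon,\mu)\to L^{p(\cdot)}(E_\varepsilon,\nu)$ the sharp constant‑exponent counterexample witnessing $k_{q_+,p_-}(q_+)=\infty$ --- built from $q_+$‑stable random variables and the divergence of their $q_+$‑th moment, i.e.\ from the blow‑up of the moment constants $c_{q_+,s}$ of Lemma~\ref{lemma_of_integration} as $s\uparrow q_+$, combined with Lemma~\ref{lemma_of_inequality_with_integrals} --- and one checks that the variable modular of the test functions so produced stays finite. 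The dual endpoint $r=p_-$ ($2<p_-<q_+$) then follows from this case via Proposition~\ref{duality formula variable}, and $r=\infty$ by the analogous transfer of the constant‑exponent counterexample for $k_{q_+,p_-}(\infty)=\infty$.

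\textbf{Main obstacle.} Everywhere except at the open endpoints of $I(p_-,q_+)$ the monotonicity/duality machinery of Section~\ref{duality monotonicity} reduces the problem cleanly to Theorem~\ref{teoDefJun}. The genuinely delicate point is precisely the exclusion of those endpoints (and of $r=\infty$): since the estimates of Theorem~\ref{decreasing and increasing} are only qualitative in the exponents, one must return to the explicit critical extremal operators and verify that the variable‑exponent modular does not destroy the norm estimates on which those examples rest --- it is exactly there that the non‑atomicity of $\nu$, $\mu$ and the sharp behaviour of stable random variables at the critical index are used in an essential way.
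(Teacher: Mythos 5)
Your ``if'' direction and the generic part of the ``only if'' direction coincide with the paper's argument: reduce to the constant exponents $p_-$, $q_+$ via Theorem~\ref{decreasing and increasing}(iii)--(iv), and for the converse restrict to the level sets $\Omega_1^{p_-+\epsilon}$, $\Omega_2^{q_+-\epsilon}$ and apply Lemma~\ref{lema 1}, Lemma~\ref{lema 2} and monotonicity to conclude $r\in\bigcap_{\epsilon>0}I(p_-+\epsilon,q_+-\epsilon)$. That part is correct and is exactly what the paper does, including the reduction of the statements for $q=1$ and $q=\infty$ to Theorem~\ref{teoDefJun}.

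The gap is in your treatment of the open endpoint ($r=q_+$ when $p_-<q_+<2$, and dually $r=p_-$ when $2<p_-<q_+$). You assert that the monotonicity estimates are ``useless'' there and propose instead to transplant the constant-exponent extremal operators directly into $L^{q(\cdot)}(F_\varepsilon,\mu)\to L^{p(\cdot)}(E_\varepsilon,\nu)$ and to ``check that the variable modular stays finite.'' This is not carried out, and as stated it faces a real obstruction: the counterexample witnessing $k_{q_+,p_-}(q_+)=\infty$ lives on genuine $L^{q_+}\to L^{p_-}$ spaces, whereas on $F_\varepsilon$ and $E_\varepsilon$ the exponents are only known to satisfy $q(\cdot)>q_+-\varepsilon$ and $p(\cdot)<p_-+\varepsilon$; transferring boundedness and vector-valued norms between these variable spaces and constant-exponent ones is precisely what the monotonicity machinery is for, so your construction would have to re-prove it. Moreover your premise is wrong: the monotonicity estimates are not useless at the endpoint. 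The inequality $k_{q_+-\epsilon,p_-+\epsilon}(q_+)\lesssim k_{q(\cdot),p(\cdot)}(q_+)$, which you have already derived with an implicit constant independent of $\epsilon$, combined with the exact value $k_{q_+-\epsilon,p_-+\epsilon}(q_+)=c_{q_+,q_+-\epsilon}/c_{q_+,p_-+\epsilon}$ from \cite[Section~4]{DefJun} and the blow-up $c_{q_+,q_+-\epsilon}\to\infty$ as $\epsilon\to0$, immediately forces $k_{q(\cdot),p(\cdot)}(q_+)=\infty$ with no new construction. This quantitative use of the lower bound you already possess is how the paper closes the endpoint case; the case $2<p_-<q_+$ then follows by Proposition~\ref{duality formula variable}, as you indicate. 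You should replace the sketched transfer of extremal operators by this argument (or actually carry the transfer out, which would amount to reproving the same asymptotics).
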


The ``if'' part of the first statement  follows from the monotonicity properties proved in Section~\ref{duality monotonicity} for the constants $k_{q(\cdot), p(\cdot)}(r)$. %
For the reverse implication we need some auxiliary results. In what follows, given measure spaces $(\Omega_1,\nu)$ and $(\Omega_2,\mu)$, we denote by $k_{L^{q(\cdot)}(\Omega_2), L^{p(\cdot)}(\Omega_1)}(r)$  the constant $k_{q(\cdot), p(\cdot)}(r)$, in order to make explicit the domain in which the variable exponents $p(\cdot), q(\cdot)$ are defined. The first lemma shows that the constants decrease when we restrict ourselves to (measurable) subsets of $\Omega_1$ and $\Omega_2$.

\begin{lemma}\label{lema 1}
Let $(\Omega_1,\nu)$, $(\Omega_2,\mu)$ be measure spaces and $p(\cdot)\in \mathcal{P}_b(\Omega_1,\nu)$, $q(\cdot)\in \mathcal{P}_b(\Omega_2,\mu)$. For all measurable subsets  $\widetilde{\Omega_1} \subset\Omega_1$ and $\widetilde{\Omega_2}\subset \Omega_2$ we have $$k_{L^{q(\cdot)}(\widetilde{\Omega_2}), L^{p(\cdot)}(\widetilde{\Omega_1})}(r)\leq k_{L^{q(\cdot)}(\Omega_2), L^{p(\cdot)}(\Omega_1)}(r).$$
\end{lemma}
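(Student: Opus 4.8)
The plan is to show that any instance of the Marcinkiewicz--Zygmund inequality on the subsets $\widetilde{\Omega_1}$, $\widetilde{\Omega_2}$ can be realized as an instance on the full spaces $\Omega_1$, $\Omega_2$, at the cost of nothing in the constant. The key point is that $L^{p(\cdot)}(\widetilde{\Omega_1},\nu)$ embeds isometrically into $L^{p(\cdot)}(\Omega_1,\nu)$ via extension by zero: if $f$ is supported in $\widetilde{\Omega_1}$, then $\rho_{p(\cdot)}(f/\lambda)$ computed over $\Omega_1$ equals the same modular computed over $\widetilde{\Omega_1}$, since the integrand vanishes off $\widetilde{\Omega_1}$ and the $L^\infty$-term over $\Omega_1^{p(\cdot)}_\infty$ reduces to the one over $\widetilde{\Omega_1}\cap\Omega_1^{p(\cdot)}_\infty$; hence the infimum defining the norm is unchanged. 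The same holds for $L^{q(\cdot)}$ on $\widetilde{\Omega_2}$.

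First I would fix a bounded linear operator $S\colon L^{q(\cdot)}(\widetilde{\Omega_2},\mu)\to L^{p(\cdot)}(\widetilde{\Omega_1},\nu)$ and a finite family $(f_k)_{k=1}^N\subset L^{q(\cdot)}(\widetilde{\Omega_2},\mu)$, and build from $S$ an operator $T\colon L^{q(\cdot)}(\Omega_2,\mu)\to L^{p(\cdot)}(\Omega_1,\nu)$ by $T(g)=\big(S(g\chi_{\widetilde{\Omega_2}})\big)\chi_{\widetilde{\Omega_1}}$, i.e.\ restrict the input to $\widetilde{\Omega_2}$, apply $S$, and extend the output by zero. By the isometric embeddings just described, $T$ is bounded with $\|T\|=\|S\|$ (the restriction map $g\mapsto g\chi_{\widetilde{\Omega_2}}$ has norm one, as does the zero-extension). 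Next, viewing each $f_k$ as an element $\widetilde{f_k}\in L^{q(\cdot)}(\Omega_2,\mu)$ (extension by zero), I would apply the defining inequality \eqref{MZ p variable} for $T$ on the full spaces with constant $k_{L^{q(\cdot)}(\Omega_2),L^{p(\cdot)}(\Omega_1)}(r)$ to the family $(\widetilde{f_k})_k$. Since $T(\widetilde{f_k})=S(f_k)$ (as a function on $\Omega_1$ supported in $\widetilde{\Omega_1}$), and since the $\ell^r$-sum of functions supported in $\widetilde{\Omega_1}$ (resp.\ $\widetilde{\Omega_2}$) is again supported there, the two norms appearing in \eqref{MZ p variable} are exactly the corresponding norms over $\widetilde{\Omega_1}$ and $\widetilde{\Omega_2}$. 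This yields
$$
\left\|\Big(\sum_{k=1}^N|S(f_k)|^r\Big)^{1/r}\right\|_{L^{p(\cdot)}(\widetilde{\Omega_1})}
\le k_{L^{q(\cdot)}(\Omega_2),L^{p(\cdot)}(\Omega_1)}(r)\,\|S\|\left\|\Big(\sum_{k=1}^N|f_k|^r\Big)^{1/r}\right\|_{L^{q(\cdot)}(\widetilde{\Omega_2})},
$$
and taking the infimum over all admissible $S$, $(f_k)_k$ and $N$ gives the claimed bound on $k_{L^{q(\cdot)}(\widetilde{\Omega_2}),L^{p(\cdot)}(\widetilde{\Omega_1})}(r)$.

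The only genuinely delicate point is the verification that zero-extension is isometric between the variable Lebesgue spaces and that the restriction and extension maps used to define $T$ are norm-one; this is a routine modular computation, but one must be careful that the ambient space on which $T$ is defined is the same (arbitrary $\sigma$-finite) measure space contemplated in the definition of $k_{L^{q(\cdot)}(\Omega_2),L^{p(\cdot)}(\Omega_1)}(r)$ — which it is, since $(\Omega_1,\nu)$ and $(\Omega_2,\mu)$ are themselves such spaces. The case $r=\infty$ is handled identically, replacing the $\ell^r$-sums by suprema. No obstruction arises from the possibility that $p(\cdot)$ or $q(\cdot)$ take the value $\infty$ on a set of positive measure, because the $L^\infty$-term in the modular also localizes to the subset.
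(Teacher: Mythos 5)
Your proposal is correct and follows essentially the same route as the paper: define $T(g)=S(g\chi_{\widetilde{\Omega_2}})$ (extended by zero), note $\|T\|\leq\|S\|$, extend the $f_k$ by zero, and apply the Marcinkiewicz--Zygmund inequality on the full spaces, using that zero-extension is isometric for the modular norms. The only cosmetic difference is that you assert $\|T\|=\|S\|$ where the paper only uses $\|T\|\leq\|S\|$, which is all that is needed.
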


\begin{proof}
Take a linear operator $S\colon L^{q(\cdot)}(\widetilde{\Omega_2}, \mu)\to L^{p(\cdot)}(\widetilde{\Omega_1}, \nu)$ and define $T_S\colon L^{q(\cdot)}(\Omega_2,\mu)\to L^{p(\cdot)}(\Omega_1,\nu)$ by
	\begin{equation*}
		T_S(f)=S(f\cdot\chi_{\widetilde{\Omega_2}}),
	\end{equation*}
which is clearly a bounded linear operator with $\|T_S\|\leq \|S\|$.	

Now, let $\left(f_k\right)_k\subset L^{q(\cdot)}(\widetilde{\Omega_2}, \mu)$ and, for each $k$,  define  $g_k\in L^{q(\cdot)}(\Omega_2, \mu)$ by extending $f_k$ as zero outside $\widetilde{\Omega_2}$. Note that
\begin{eqnarray*}
\left\| \left(\sum_k \left|S(f_k)\right|^r \right)^{\frac{1}{r}}\right\|_{L^{p(\cdot)}(\widetilde{\Omega_1}, \nu)}
&=&
\left\| \left(\sum_k \left|S(g_k\cdot \chi_{\widetilde{\Omega_2}})\right|^r \right)^{\frac{1}{r}}\right\|_{L^{p(\cdot)}(\Omega_1, \nu)}
\\
&=&
\left\| \left(\sum_k \left|T_S(g_k)\right|^r \right)^{\frac{1}{r}}\right\|_{L^{p(\cdot)}(\Omega_1, \nu)}
\\
&\leq& k_{L^{q(\cdot)}(\Omega_2),L^{p(\cdot)}(\Omega_1)}(r) \left\|T_S\right\|	\left\| \left(\sum_k \left|g_k\right|^r \right)^{\frac{1}{r}}\right\|_{L^{q(\cdot)}(\Omega_2, \nu)}
\\
&\leq& k_{L^{q(\cdot)}(\Omega_2),L^{p(\cdot)}(\Omega_1)}(r) \left\|S\right\|	\left\| \left(\sum_k \left|f_k\right|^r \right)^{\frac{1}{r}}\right\|_{L^{q(\cdot)}(\widetilde{\Omega_2}, \nu)}.
\end{eqnarray*}
Consequently, $k_{L^{q(\cdot)}(\widetilde{\Omega_2}),L^{p(\cdot)}(\widetilde{\Omega_1})}(r)\leq k_{L^{q(\cdot)}(\Omega_2),L^{p(\cdot)}(\Omega_1)}(r)$.
\end{proof}

Recall form the Section \ref{intro} that, for $1\le p,q\le\infty $ (constant exponents),  $k_{q,p}(r)$ is defined as the infimum of all the constants $C>1$ such that
\begin{equation*}
 \left\| \left(\sum_{k=1}^N |T(f_k)|^r\right)^{1/r} \right\|_{L^p(\Omega_1, \nu)}  \leq C \|T\| \left\| \left(\sum_{k=1}^N |f_k|^r\right)^{1/r} \right\|_{L^q(\Omega_2, \mu)}
\end{equation*}
holds for every operator $T\colon L^q(\Omega_2, \mu)\to L^p(\Omega_1, \nu)$, every $(f_k)_{k=1}^N \in L^q(\Omega_2, \mu)$, every $N\in \mathbb{N}$ and \emph{every} measure spaces $(\Omega_1, \nu), (\Omega_2, \mu)$. In \cite{DefJun, Jun}  it is observed that it suffices to look at $C$'s satisfying the inequality for two fixed measure spaces $(\Omega_1^0, \nu_0)$ and $(\Omega_2^0, \mu_0)$ (whenever $L^p(\Omega_1^0, \nu_0)$ and $L^q(\Omega_2^0, \mu_0)$ are infinite-dimensional). We state this result in the following lemma.
\begin{lemma}[\cite{DefJun, Jun}]\label{lema 2}
Fix two measure spaces $(\Omega_1^0, \nu_0)$ and $(\Omega_2^0, \mu_0)$ which are not  union of finitely many atoms. Then,
$
k_{q,p}(r)=k_{L^q(\Omega_2^0), L^p(\Omega_1^0)}(r).
$
\end{lemma}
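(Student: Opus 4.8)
The inequality $k_{L^q(\Omega_2^0),L^p(\Omega_1^0)}(r)\le k_{q,p}(r)$ is immediate, since the left-hand side is the infimum appearing in the definition of $k_{q,p}(r)$ taken only over operators between the two fixed spaces. The content is the reverse inequality, and the plan is to show that, up to an arbitrarily small error, an arbitrary operator $T\colon L^q(\Omega_2,\mu)\to L^p(\Omega_1,\nu)$ together with a finite family $(f_k)_{k=1}^N$ can be transported to the fixed spaces: I would produce an operator $S\colon L^q(\Omega_2^0,\mu_0)\to L^p(\Omega_1^0,\nu_0)$ with $\|S\|\le\|T\|$ and functions $(g_k)_{k=1}^N\subset L^q(\Omega_2^0,\mu_0)$ such that $\|(\sum_k|g_k|^r)^{1/r}\|_{L^q(\Omega_2^0)}=\|(\sum_k|f_k|^r)^{1/r}\|_{L^q(\Omega_2)}$ and $\|(\sum_k|Sg_k|^r)^{1/r}\|_{L^p(\Omega_1^0)}=\|(\sum_k|Tf_k|^r)^{1/r}\|_{L^p(\Omega_1)}$; applying the definition of $k_{L^q(\Omega_2^0),L^p(\Omega_1^0)}(r)$ to $S$ and $(g_k)_k$ then gives the Marcinkiewicz--Zygmund inequality for $T$ and $(f_k)_k$ with constant $k_{L^q(\Omega_2^0),L^p(\Omega_1^0)}(r)$, whence $k_{q,p}(r)\le k_{L^q(\Omega_2^0),L^p(\Omega_1^0)}(r)$.

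First comes a \emph{discretization} step. Fix $\varepsilon>0$. Using that simple functions (with finite-measure support when $q<\infty$) are dense in $L^q(\Omega_2,\mu)$ and that $(h_k)_k\mapsto\|(\sum_k|h_k|^r)^{1/r}\|$ is continuous on finite products of $L^q$ and of $L^p$, I would replace each $f_k$ by a simple function supported on a common finite partition $A_1,\dots,A_M$ of a finite-measure subset of $\Omega_2$, at the cost of $\varepsilon$ in the relevant ratio. On the range side I would compose $T$ with the conditional expectation $\mathbb E_{\mathcal B}$ onto the $\sigma$-algebra generated by a finite partition $B_1,\dots,B_L$ of $\Omega_1$: since $\mathbb E_{\mathcal B}$ is a contraction on $L^p$ that fixes $\mathcal B$-measurable simple functions, refining $\mathcal B$ so as to move each of the finitely many functions $Tf_k$ by less than $\varepsilon$ (again possible by density of simple functions) replaces $T$ by an operator $T'$ of norm $\le\|T\|$ that sends each $f_k$ to a simple function on $B_1,\dots,B_L$, with another $\varepsilon$ error. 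For $q=\infty$ or $p=\infty$ the same scheme works after replacing, on infinite-measure pieces, averages by positive norm-one functionals; I would carry out these endpoint cases separately.

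Second comes the \emph{transfer}. The space $S_A$ of simple functions on $A_1,\dots,A_M$ with the $L^q(\Omega_2,\mu)$-norm is isometrically a finite-dimensional weighted space $\ell^q_M(w)$ with $w_i=\mu(A_i)$, and the simple functions $S_B$ on $B_1,\dots,B_L$ form $\ell^p_L(v)$ with $v_j=\nu(B_j)$; moreover $T'$ maps $S_A$ into $S_B$ with norm $\le\|T\|$. Because $(\Omega_2^0,\mu_0)$ is not a finite union of atoms it contains $M$ pairwise disjoint sets of positive finite measure, which produces a lattice isometry $J_2\colon\ell^q_M(w)\hookrightarrow L^q(\Omega_2^0,\mu_0)$ onto a block subspace; being spanned by normalized disjointly supported functions, that block subspace is the range of a norm-one (conditional-expectation) projection $P$ on $L^q(\Omega_2^0,\mu_0)$. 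Likewise one gets a lattice isometry $J_1\colon\ell^p_L(v)\hookrightarrow L^p(\Omega_1^0,\nu_0)$. Setting $S:=J_1\circ(T'|_{S_A})\circ J_2^{-1}\circ P$ gives $\|S\|\le\|T\|$ and $S(g_k)=J_1(T'f_k)$ for $g_k:=J_2(f_k)$; since $J_1,J_2$ are lattice isometries, $\|(\sum_k|g_k|^r)^{1/r}\|_{L^q(\Omega_2^0)}$ and $\|(\sum_k|Sg_k|^r)^{1/r}\|_{L^p(\Omega_1^0)}$ equal the corresponding norms for $(f_k)$ and $(T'f_k)$. Combining this with the definition of the constant for the fixed pair and letting $\varepsilon\to0$ to undo the two approximations yields the claimed inequality; this is, in essence, the argument of \cite{DefJun,Jun}.

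The step that genuinely uses the hypothesis, and the one I expect to be the main obstacle, is the transfer: $L^p$-spaces are not injective, so a contraction defined only on the finite-dimensional subspace $S_A$ cannot in general be extended to all of $L^q(\Omega_2^0,\mu_0)$ with controlled norm, and the fix is to position the domain copy as a \emph{block} subspace — which is $1$-complemented — so that the extension is simply the factored operator composed with $P$. The secondary delicate point is the endpoint exponents $p,q\in\{1,\infty\}$, where conditional expectations on infinite-measure pieces and the ``universality'' of $L^\infty$ for finite-dimensional subspaces require minor ad hoc modifications.
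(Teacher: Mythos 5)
The paper offers no proof of this lemma --- it is quoted from \cite{DefJun, Jun} --- and your argument is correct and is essentially the standard one behind those references: discretize to finite-dimensional sublattices of simple functions (using density and a norm-one conditional-expectation-type map on the target), then transplant the resulting finite-rank operator into the fixed spaces through disjointly supported, hence $1$-complemented, isometric lattice copies of $\ell^q_M(w)$ and $\ell^p_L(v)$, which exist precisely because the fixed spaces are not finite unions of atoms. The two delicate points you identify (that the domain copy must sit as a $1$-complemented block subspace, since contractions on subspaces of $L^q$ admit no norm-controlled extension in general, and the endpoint exponents where averages over infinite-measure sets must be replaced by positive norm-one functionals) are genuine, and your fixes for them are the right ones.
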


Now we are ready to prove Theorem~\ref{teoDefJunVariable}.
 In the sequel, we write $\lesssim$ to mean that inequality holds up to a universal constant. For example, $k_{q(\cdot), p(\cdot)}(r)\lesssim k_{q_+, p_-}(r)$ means that  there exist a universal constant $C>0$, independent of the exponents, the measure spaces and $r$, such that $k_{q(\cdot), p(\cdot)}(r)\le C k_{q_+, p_-}(r)$.

\begin{proof}[Proof of Theorem~\ref{teoDefJunVariable}]
Suppose first that $r \in I(p_- ,q_+)$. By Theorem \ref{teoDefJun} and the monotonicity properties proved in Theorem \ref{decreasing and increasing}, we have
$$
k_{q(\cdot), p(\cdot)}(r)\lesssim k_{q_+, p_-}(r)<\infty.
$$
Now suppose that $k_{q(\cdot), p(\cdot)}(r)=k_{L^{q(\cdot)}(\Omega_2), L^{p(\cdot)}(\Omega_1)}(r)<\infty$ and let us show that $r \in I(p_-, q_+)$. For any $\epsilon>0$ define the subsets
\begin{equation*}
		\Omega_2^{q_+-\epsilon}=\lbrace x\in\Omega_2:\,\,q(x)>q_+-\epsilon\rbrace
	\quad
	\text{and}
	\quad
		\Omega_1^{p_-+\epsilon}=\lbrace x\in\Omega_1:\,\,p(x)<p_-+\epsilon\rbrace.
	\end{equation*}
Notice that  $\mu(\Omega_2^{q_+-\epsilon})>0$ and $\nu(\Omega_1^{p_-+\epsilon})>0$ and that $(\Omega_1^{p_-+\epsilon}, \nu)$ and $(\Omega_2^{q_+-\epsilon}, \mu)$ are not union of finitely many atoms (since $(\Omega_1, \nu)$ and $(\Omega_2, \mu)$ are non-atomic).
By Lemma~\ref{lema 2}, monotonicity and Lemma~\ref{lema 1} we have
\begin{eqnarray}\label{desigualdad constante-variable}
\nonumber k_{q_+-\epsilon, p_-+\epsilon}(r)&=&k_{L^{q_+-\epsilon}(\Omega_2^{q_+-\epsilon}), L^{p_-+\epsilon}(\Omega_1^{p_-+\epsilon})}(r)\\
\nonumber &\lesssim&  k_{L^{q(\cdot)}(\Omega_2^{q_+-\epsilon}), L^{p(\cdot)}(\Omega_1^{p_-+\epsilon})}(r)\\
&\lesssim& k_{L^{q(\cdot)}(\Omega_2), L^{p(\cdot)}(\Omega_1)}(r)<\infty.
\end{eqnarray}
This holds for any $\epsilon>0$  and then we have
\begin{equation}\label{eq-clausuraI}
r\in\bigcap_{\epsilon>0}I(p_-+\epsilon,q_+-\epsilon).
 \end{equation}
The rest of the proof is split depending on the values of  $p_-$ and $ q_+$.

\noindent \emph{Case $p_-<q_+<2$.} In what follows, note that if $\epsilon>0$ is sufficiently small, then $p_-+\epsilon < q_+ -\epsilon$. By \eqref{eq-clausuraI} and Theorem \ref{teoDefJun},  we know that  $r$ belongs to $\bigcap_{\epsilon>0}I(p_-+\epsilon,q_+-\epsilon) =\bigcap_{\epsilon>0}(q_+-\epsilon,2]= [q_+, 2]$, so we only have to show that $r$ cannot be equal to $q_+$.
In \cite[Section~4]{DefJun} it is proved that, for $\epsilon>0$ sufficiently small, we have
$$
k_{q_+-\epsilon, p_-+\epsilon}(r)=\frac{c_{r,q_+-\epsilon}}{c_{r,p_-+\epsilon}},
$$
where $c_{r,p}$ is the \emph{pth moment} of the $r$-stable L\'evy measure on $\mathbb{R}$ given by
$$
c_{r,p}=\left(\frac{\Gamma\left(\frac{r-p}{r}\right) \Gamma\left(\frac{1+p}{2}\right)}{\Gamma\left(\frac{2-p}{2}\right) \Gamma\left(\frac{1}{2}\right)}\right)^{1/p}.
$$
If $r=q_+$, as in \eqref{desigualdad constante-variable} we have for $\epsilon>0$ sufficiently small
$$
k_{L^{q(\cdot)}(\Omega_2), L^{p(\cdot)}(\Omega_1)}(q_+) \gtrsim k_{q_+-\epsilon, p_-+\epsilon}(q_+)=\frac{c_{q_+,q_+-\epsilon}}{c_{q_+,p_-+\epsilon}} \underset{\epsilon\to 0}{\longrightarrow}+\infty.
$$
Since $ k_{L^{q(\cdot)}(\Omega_2), L^{p(\cdot)}(\Omega_1)}(r)$ is finite,  we conclude that $r$ cannot be equal to $q_+$.

\noindent \emph{Case $2<p_-<q_+$.} This case follows from the previous one and a duality argument. On the one hand, note that if $2<p_-<q_+$ then $(q_+)'<(p_-)'<2$ and, since $(q_+)'=(q')_-$ and $(p_-)'=(p')_+$, we have $(q')_-<(p')_+<2$. On the other hand, since $k_{q(\cdot), p(\cdot)}(r)<\infty$, by Proposition~\ref{duality formula variable} we have $k_{p'(\cdot), q'(\cdot)}(r')<\infty$. Hence, by the previous case we know that $r'\in I(q'(\cdot), p'(\cdot))=((p')_+, 2]$ and, consequently, $r\in [2, ((p')_+)')=[2, p_-)$, which is the desired statement.

\noindent \emph{Case $p_-\leq 2<q_+$.} 
Take  $\epsilon>0$ such that $ 2+\epsilon <q_+-\epsilon$. Since $$k_{q_+-\epsilon, 2+\epsilon}(r)\le k_{q_+-\epsilon, p_-+\epsilon}(r)\lesssim k_{q(\cdot), p(\cdot)}(r) < \infty,$$
by  Theorem~\ref{teoDefJun} we have $r\in [2,2+\epsilon)$. Since this holds for every  $\epsilon> 0$ (sufficiently small) we see that $r=2$.

\noindent \emph{Case $p_-< 2\leq q_+$.} This case is analogous to the previous one.

\noindent \emph{Case $q_+\leq p_-$.}  Since $q_+-\epsilon<p_-+\epsilon$ for any $\epsilon>0$, by \eqref{eq-clausuraI} and  Theorem~\ref{teoDefJun}, we have $$r\in \bigcap_{\epsilon>0}[\min\{2, q_+-\epsilon\}, \max\{2, p_-+\epsilon\}]=[\min\{2, q_+\}, \max\{2, p_-\}]=  I(p_-, q_+). $$
Finally, the statements for $q=\infty$ and $q=1$  follow easily from the monotonicity in $p(\cdot)$ shown in part $(iii)$ of Theorem \ref{decreasing and increasing} and Theorem~\ref{teoDefJun}.
\end{proof}

\section{Applications}\label{applications}

In this section we apply Theorem~\ref{teoDefJunVariable} to characterize boundedness of linear operators defined on variable Lebesgue spaces in terms of some weighted inequalities and to obtain vector-valued inequalities for fractional operators with rough kernel. For simplicity, all the measure spaces considered here are non-atomic, although weighted inequalities can be obtained also for measures with atoms, applying Theorem~\ref{teoDefJunAtomic} below instead of Theorem~\ref{teoDefJunVariable}.

\subsection{Weighted norm inequalities}

It is a well-known fact that vector-valued inequalities are closely related to weighted norm inequalities and factorization theorems.
In \cite[Chapter~VI.5]{GarRub} it is shown that, under certain hypotheses on the parameters $1\leq p, q, r<\infty$, $\ell^r$-valued extensions of operators $T\colon L^q(\Omega_2, \mu)\to L^p(\Omega_1, \nu)$ are equivalent to some weighted $L^r$-inequalities. In \cite[Section~3]{Gar} (see also \cite{SanTra}) the same type of results were extended to the context of operators between $r$-convex and $r$-concave Banach lattices with absolutely continuous norm. Recall that a Banach lattice $X$ is \emph{$r$-convex} if there is a constant $M>0$ such that
$$
\left\| \left( \sum_k |x_k|^r\right)^{1/r}\right\|_X\leq M \left( \sum_k \|x_k\|_X^r\right)^{1/r}
$$
for every finite sequence $(x_k)_k\subset X$, and is  \emph{$r$-concave} if there is a constant $M>0$ such that
$$
\left( \sum_k \|x_k\|_X^r\right)^{1/r} \leq M \left\| \left( \sum_k |x_k|^r\right)^{1/r}\right\|_X
$$
for every finite sequence $(x_k)_k\subset X$. The space $L^{p(\cdot)}(\Omega, \nu)$ is $r$-convex if $r\leq p(\cdot)$ $\nu$-a.e. and is $r$-concave if $p(\cdot) \leq r$ $\nu$-a.e.  (see \cite[Propositions~3~and~4]{Kam}, where the result is stated in the more general setting of Musielak-Orlicz spaces). Also,  $L^{p(\cdot)}(\Omega, \nu)$ has absolutely continuous norm if  $p(\cdot)\in \mathcal{P}_b(\Omega,\nu)$ (see \cite{EdmLanNek}). Therefore,  we can apply \cite[Theorem~3.1~and~3.3]{Gar} to obtain the equivalence between $\ell^r$-valued inequalities and weighted $L^r$-inequalities in the particular case of operators between variable Lebesgue spaces. We state this equivalence in the next proposition. Let us recall that an operator $T\colon L^{q(\cdot)}(\Omega_2, \mu)\to L^{p(\cdot)}(\Omega_1, \nu)$ has an $\ell^r$-valued (bounded) extension if \eqref{MZ p variable} holds.
In what follows, we denote $L_+^{p(\cdot)}(\Omega, \nu)=\{f\in L^{p(\cdot)}(\Omega, \nu):\,\, f> 0\}$.

\begin{proposition}[\cite{Gar}]\label{weighted norm thm}
Let $(\Omega_1,\nu)$ and $(\Omega_2,\mu)$ be measure spaces, $p(\cdot)\in \mathcal{P}_b(\Omega_1,\nu)$, $q(\cdot)\in \mathcal{P}_b(\Omega_2,\mu)$, $1< r<\infty$ and define
$$
\frac{1}{\alpha(\cdot)}=\left| 1-\frac{r}{p(\cdot)}\right| \quad \text{and}\quad \frac{1}{\beta(\cdot)}=\left| 1-\frac{r}{q(\cdot)}\right|.
$$
Let $T\colon L^{q(\cdot)}(\Omega_2, \mu)\to L^{p(\cdot)}(\Omega_1, \nu)$ be a linear operator.
\begin{enumerate}[label=(\roman*)]
\item If $r< p(\cdot)$ $\nu$-a.e. and $r<q(\cdot)$ $\mu$-a.e., then \eqref{MZ p variable} holds if and only if, for each $u\in L_+^{\alpha(\cdot)}(\Omega_1, \nu)$, there exist $U\in L_+^{\beta(\cdot)}(\Omega_2, \mu)$ such that $\|U\|_ {\beta(\cdot)}\leq \|u\|_ {\alpha(\cdot)}$ and
\begin{equation*}
\int_{\Omega_1} |T(f)(x)|^r u(x)\, d\nu(x) \leq K \int_{\Omega_2} |f(x)|^r U(x)\, d\mu(x)
\end{equation*}
for some constant $K>0$ for every $f\in L^{q(\cdot)}(\Omega_2, \mu)$.

\item If $r>p(\cdot)$ $\nu$-a.e. and $r>q(\cdot)$ $\mu$-a.e., then \eqref{MZ p variable} holds if and only if, for each $u\in L_+^{\beta(\cdot)}(\Omega_2, \mu)$, there exist $U\in L_+^{\alpha(\cdot)}(\Omega_1, \nu)$ such that $\|U\|_ {\alpha(\cdot)}\leq \|u\|_ {\beta(\cdot)}$ and
\begin{equation*}
\int_{\Omega_1} |T(f)(x)|^r U^{-1}(x)\, d\nu(x) \leq K \int_{\Omega_2} |f(x)|^r u^{-1}(x)\, d\mu(x)
\end{equation*}
for some constant $K>0$ for every $f\in L^{q(\cdot)}(\Omega_2, \mu)$.
\end{enumerate}
\end{proposition}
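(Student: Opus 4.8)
The plan is to obtain Proposition~\ref{weighted norm thm} as a direct application of the abstract factorization theorems \cite[Theorems~3.1~and~3.3]{Gar}, which establish --- for a linear operator between two $r$-convex (respectively, two $r$-concave) Banach lattices with absolutely continuous norm --- the equivalence between the existence of an $\ell^r$-valued extension and a weighted $L^r$-inequality with a controlled change of weight. So the work splits into two tasks: verifying that the variable Lebesgue spaces in the statement satisfy the structural hypotheses of those theorems, and identifying the abstract weight classes of \cite{Gar} with the concrete spaces $L^{\alpha(\cdot)}(\Omega_1,\nu)$ and $L^{\beta(\cdot)}(\Omega_2,\mu)$.

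For the first task I would simply quote the two facts recalled just before the statement. By \cite[Propositions~3~and~4]{Kam}, in case $(i)$ (where $r<p(\cdot)$ $\nu$-a.e.\ and $r<q(\cdot)$ $\mu$-a.e.) the spaces $L^{p(\cdot)}(\Omega_1,\nu)$ and $L^{q(\cdot)}(\Omega_2,\mu)$ are $r$-convex, while in case $(ii)$ (where $r>p(\cdot)$ $\nu$-a.e.\ and $r>q(\cdot)$ $\mu$-a.e.) they are $r$-concave; and since $p(\cdot)\in\mathcal{P}_b(\Omega_1,\nu)$ and $q(\cdot)\in\mathcal{P}_b(\Omega_2,\mu)$, both have absolutely continuous norm by \cite{EdmLanNek}. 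These are exactly the hypotheses under which \cite[Theorem~3.1]{Gar} applies to $T\colon L^{q(\cdot)}(\Omega_2,\mu)\to L^{p(\cdot)}(\Omega_1,\nu)$ in case $(i)$, and \cite[Theorem~3.3]{Gar} applies in case $(ii)$, giving in each case an equivalence between \eqref{MZ p variable} and a weighted $L^r$-inequality.

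For the second task I would match the weights. In \cite{Gar} the admissible weights for the target space range over the positive cone of the associate space of the power space $L^{p(\cdot)/r}(\Omega_1,\nu)$, and analogously for the domain; in case $(i)$ the identity $\tfrac1{\alpha(\cdot)}=1-\tfrac{r}{p(\cdot)}$ says precisely that $\alpha(\cdot)=(p(\cdot)/r)'$ (and $\beta(\cdot)=(q(\cdot)/r)'$), so this associate space is $L^{\alpha(\cdot)}(\Omega_1,\nu)$, up to equivalent norms. In case $(ii)$ one has $p(\cdot)/r<1$ and $q(\cdot)/r<1$; the corresponding computation, now with $\tfrac1{\alpha(\cdot)}=\tfrac{r}{p(\cdot)}-1$ and $\tfrac1{\beta(\cdot)}=\tfrac{r}{q(\cdot)}-1$, yields the reciprocal weights $U^{-1}$ and $u^{-1}$ that appear in the statement, and the normalizations $\|U\|_{\beta(\cdot)}\le\|u\|_{\alpha(\cdot)}$ and $\|U\|_{\alpha(\cdot)}\le\|u\|_{\beta(\cdot)}$ are the ones furnished by \cite[Theorems~3.1~and~3.3]{Gar}. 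I expect this bookkeeping to be the main (and essentially only) delicate point: one must confirm that the associate space of $L^{p(\cdot)/r}(\Omega_1,\nu)$ is $L^{(p(\cdot)/r)'}(\Omega_1,\nu)$ with equivalent norms --- the standard variable-exponent duality already exploited in Section~\ref{duality monotonicity}, cf.\ \eqref{normas equivalentes} --- track the direction in which the weight is transported, and handle the quasi-Banach regime $p(\cdot)/r<1$ of case $(ii)$, where the relevant duality relates $X$ to the $r$-th power of its associate space rather than $X^{1/r}$ to its dual.
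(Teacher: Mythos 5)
Your proposal is correct and follows exactly the route the paper takes: the paper states Proposition~\ref{weighted norm thm} as a direct consequence of \cite[Theorems~3.1~and~3.3]{Gar}, justified only by the $r$-convexity/$r$-concavity of variable Lebesgue spaces from \cite[Propositions~3~and~4]{Kam} and absolute continuity of the norm from \cite{EdmLanNek}, with no further proof given. Your additional bookkeeping identifying the weight classes with $L^{\alpha(\cdot)}$ and $L^{\beta(\cdot)}$ via the variable-exponent associate-space duality \eqref{normas equivalentes} is a correct and welcome elaboration of what the paper leaves implicit.
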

In the case $T\colon L^{p(\cdot)}(\Omega_1, \nu)\to L^{p(\cdot)}(\Omega_1, \nu)$, an iteration argument can be applied in order to unify the weights $u$ and $U$ appearing in the previous result. This is stated and proved in \cite[Theorem~3.6~and~3.8]{Gar}.

\begin{proposition}[\cite{Gar}]\label{coro peso unificado}
We follow the notation in Proposition~\ref{weighted norm thm}. A linear operator $T\colon L^{p(\cdot)}(\Omega_1, \nu)\to L^{p(\cdot)}(\Omega_1, \nu)$ satisfies \eqref{MZ p variable} with $q(\cdot)=p(\cdot)$ and $r<p(\cdot)$ or $r>p(\cdot)$ $\nu$-a.e. if and only if for each $u\in L_+^{\alpha(\cdot)}(\Omega_1, \nu)$ there exists $w\in L_+^{\alpha(\cdot)}(\Omega_1, \nu)$ such that
\begin{itemize}
\item $u(x)\leq w(x)$ for $\nu$-a.e. $x\in \Omega_1$;
\item $\|w\|_{\alpha(\cdot)}\leq \max\{2, 2^{r/r'}\}\,\|u\|_{\alpha(\cdot)}$;
\item $T\colon L^r(w^{\sigma}d\nu)\to L^r(w^{\sigma}d\nu)$ is bounded, where $\sigma=1$ if $r<p(\cdot)$ and $\sigma=-1$ if $r>p(\cdot)$.
\end{itemize}
Moreover, the norm of $T\colon L^r(w^{\sigma}d\nu)\to L^r(w^{\sigma}d\nu)$ does not depend on the weight $u$.
\end{proposition}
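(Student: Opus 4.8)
The plan is to run a Rubio de Francia type iteration on top of the two-weight equivalence of Proposition~\ref{weighted norm thm}. I would treat the case $r<p(\cdot)$ $\nu$-a.e.\ (so $\sigma=1$) first, and reduce the case $r>p(\cdot)$ $\nu$-a.e.\ ($\sigma=-1$) to it by duality. The implication ``$\Leftarrow$'' needs no iteration: given $u\in L_+^{\alpha(\cdot)}(\Omega_1,\nu)$, take the weight $w$ supplied by the hypothesis and put $U:=w/\max\{2,2^{r/r'}\}$, so that $\|U\|_{\alpha(\cdot)}\le\|u\|_{\alpha(\cdot)}$; since $u\le w$ and $T$ is bounded on $L^r(w\,d\nu)$ with some norm $N$ independent of $u$, for every $f$ we get $\int_{\Omega_1}|T(f)|^r u\,d\nu\le\int_{\Omega_1}|T(f)|^r w\,d\nu\le N^r\int_{\Omega_1}|f|^r w\,d\nu=N^r\max\{2,2^{r/r'}\}\int_{\Omega_1}|f|^r U\,d\nu$, which is the weighted estimate of Proposition~\ref{weighted norm thm}(i) with a constant independent of $u$, so \eqref{MZ p variable} holds. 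For $\sigma=-1$ one argues identically, now using Proposition~\ref{weighted norm thm}(ii) and the pointwise inequality $w^{-1}\le u^{-1}$.

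For ``$\Rightarrow$'' with $r<p(\cdot)$, Proposition~\ref{weighted norm thm}(i) with $q(\cdot)=p(\cdot)$ (so $\beta(\cdot)=\alpha(\cdot)$) provides, for each $v\in L_+^{\alpha(\cdot)}(\Omega_1,\nu)$, a weight $U(v)\in L_+^{\alpha(\cdot)}(\Omega_1,\nu)$ with $\|U(v)\|_{\alpha(\cdot)}\le\|v\|_{\alpha(\cdot)}$ and $\int_{\Omega_1}|T(f)|^r v\,d\nu\le K\int_{\Omega_1}|f|^r U(v)\,d\nu$ for every $f$, where $K$ is \emph{one} constant, independent of $v$. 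I would then fix $u$, set $U^0(u)=u$ and $U^k(u)=U\bigl(U^{k-1}(u)\bigr)$, and define
$$
w:=\sum_{k=0}^{\infty}2^{-k}\,U^k(u).
$$
Since $\|U^k(u)\|_{\alpha(\cdot)}\le\|u\|_{\alpha(\cdot)}$ for every $k$ and variable Lebesgue spaces are complete, this series converges in $L^{\alpha(\cdot)}(\Omega_1,\nu)$ and, being a sum of nonnegative functions, also pointwise, to a function $w\in L_+^{\alpha(\cdot)}(\Omega_1,\nu)$ with $u\le w$ and $\|w\|_{\alpha(\cdot)}\le 2\|u\|_{\alpha(\cdot)}\le\max\{2,2^{r/r'}\}\|u\|_{\alpha(\cdot)}$. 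Then monotone convergence together with the two-weight bound applied to $v=U^k(u)$ gives, for every $f$,
$$
\int_{\Omega_1}|T(f)|^r w\,d\nu=\sum_{k\ge0}2^{-k}\int_{\Omega_1}|T(f)|^r U^k(u)\,d\nu\le K\sum_{k\ge0}2^{-k}\int_{\Omega_1}|f|^r U^{k+1}(u)\,d\nu\le 2K\int_{\Omega_1}|f|^r w\,d\nu ,
$$
so $T$ is bounded on $L^r(w\,d\nu)$ with norm at most $(2K)^{1/r}$, a bound independent of $u$. This is exactly the conclusion with $\sigma=1$.

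For ``$\Rightarrow$'' with $r>p(\cdot)$ I would pass to the adjoint. As $p(\cdot)\in\mathcal{P}_b$, $L^{p(\cdot)}$ is reflexive, so $T^{*}\colon L^{p'(\cdot)}(\Omega_1,\nu)\to L^{p'(\cdot)}(\Omega_1,\nu)$ is bounded; moreover $r'<p'(\cdot)$ $\nu$-a.e., and by Proposition~\ref{duality formula variable} the validity of \eqref{MZ p variable} for $T$ at $r$ forces the analogous inequality for $T^{*}$ at $r'$. Applying the case already settled to $T^{*}$, $r'$, $p'(\cdot)$ with input $u':=u^{r'/r}$ (one checks $u'\in L_+^{\alpha^{*}(\cdot)}$, where $1/\alpha^{*}(\cdot)=1-r'/p'(\cdot)$, and the identity $\alpha^{*}(\cdot)=(r/r')\,\alpha(\cdot)$) produces $w'$ with $u'\le w'$, $\|w'\|_{\alpha^{*}(\cdot)}\le 2\|u'\|_{\alpha^{*}(\cdot)}$, and $T^{*}$ bounded on $L^{r'}(w'\,d\nu)$ with norm independent of $u$. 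I would then set $w:=(w')^{r/r'}$ and use the scaling law $\||g|^{t}\|_{s(\cdot)}=\|g\|_{t\,s(\cdot)}^{t}$ to obtain $u\le w$ and $\|w\|_{\alpha(\cdot)}=\|w'\|_{\alpha^{*}(\cdot)}^{r/r'}\le 2^{r/r'}\|u\|_{\alpha(\cdot)}$; and since $(w')^{1-r}=w^{-1}$, the dual of $L^{r'}(w'\,d\nu)$ is $L^{r}(w^{-1}\,d\nu)$, so taking adjoints---and checking by a standard density argument that $(T^{*})^{*}$ coincides with $T$ on a common dense core---shows $T$ bounded on $L^{r}(w^{-1}\,d\nu)$ with norm independent of $u$. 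This gives the conclusion with $\sigma=-1$.

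The step I expect to be the main obstacle is ensuring that the weight $U(v)$ in Proposition~\ref{weighted norm thm} can be chosen with a constant $K$ that is \emph{uniform in $v$}: this uniformity is precisely what makes the telescoping estimate collapse to the factor $2K$, and it rests on the quantitative form of that equivalence (the constant there is controlled by the norm of the vector-valued extension of $T$). The remaining ingredients---completeness of $L^{\alpha(\cdot)}(\Omega_1,\nu)$, the scaling law for variable Lebesgue norms, and the duality $\bigl(L^{r'}(w'\,d\nu)\bigr)^{*}=L^{r}(w^{-1}\,d\nu)$ together with $(T^{*})^{*}=T$---are routine bookkeeping.
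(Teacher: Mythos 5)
Your proposal is correct, and it is essentially the argument the paper points to rather than proves: the text only cites \cite[Theorems~3.6 and 3.8]{Gar} and remarks that ``an iteration argument'' unifies the weights, and your Rubio de Francia iteration for $r<p(\cdot)$ together with the duality reduction of $r>p(\cdot)$ to $r'<p'(\cdot)$ is precisely that standard argument, with the exponent bookkeeping ($\alpha^{*}(\cdot)=(r/r')\alpha(\cdot)$, the scaling law, and the identification of the dual of $L^{r'}(w'\,d\nu)$ with $L^{r}(w^{-1}\,d\nu)$) checking out and accounting for the constant $\max\{2,2^{r/r'}\}$. The one point to make explicit is that Proposition~\ref{duality formula variable} is stated as a bound on the constants $k_{q(\cdot),p(\cdot)}(r)$ taken over \emph{all} operators, whereas you need its operator-by-operator content (the transposition of the vector-valued extension carried out in its proof) to pass from \eqref{MZ p variable} for the single operator $T$ at $r$ to the corresponding inequality for $T^{*}$ at $r'$; this is immediate from that proof but should be said.
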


Now we are ready to state our first application of Theorem~\ref{teoDefJunVariable}. In some sense, and following \cite[Chapter~VI.5, Corollary~5.4]{GarRub}, we show (for certain ranges of $p(\cdot), q(\cdot), r$) that the information concerning the boundedness of an operator $T\colon L^{q(\cdot)}(\Omega_2, \mu)\to L^{p(\cdot)}(\Omega_1, \nu)$ is contained in weighted $L^r$-inequalities.

\begin{corollary}\label{application weighted ineq}
We follow the notation in Proposition~\ref{weighted norm thm}.
Let $T\colon L^{q(\cdot)}(\Omega_2, \mu)\to L^{p(\cdot)}(\Omega_1, \nu)$ be any linear operator.
\begin{enumerate}[label=(\roman*)]
\item If $2\leq r<\min\{p_-, q_-\}$ then  $T\colon L^{q(\cdot)}(\Omega_2, \mu)\to L^{p(\cdot)}(\Omega_1, \nu)$ is bounded if and only if given $u\in L_+^{\alpha(\cdot)}(\Omega_1, \nu)$, there exist $U\in L_+^{\beta(\cdot)}(\Omega_2, \mu)$ such that $\|U\|_ {\beta(\cdot)}\leq \|u\|_ {\alpha(\cdot)}$ and $T\colon L^r(\Omega_2, U\,d\mu) \to L^r(\Omega_1, u\,d\nu)$ is bounded.
\item If $\max\{p_+, q_+\}<r\leq 2$ then $T\colon L^{q(\cdot)}(\Omega_2, \mu)\to L^{p(\cdot)}(\Omega_1, \nu)$ is bounded if and only if given $u\in L_+^{\beta(\cdot)}(\Omega_2, \mu)$, there exist $U\in L_+^{\alpha(\cdot)}(\Omega_1, \nu)$ such that $\|U\|_ {\alpha(\cdot)}\leq \|u\|_ {\beta(\cdot)}$ and $T\colon L^r(\Omega_2, u^{-1}\,d\mu) \to L^r(\Omega_1, U^{-1}\,d\nu)$ is bounded.
\item In the case $q(\cdot)=p(\cdot)$ we can unify the weights as follows:
if $2\leq r<p_-$ (respectively, $p_+<r\leq 2$), then $T\colon L^{p(\cdot)}(\Omega_1, \nu)\to L^{p(\cdot)}(\Omega_1, \nu)$ is bounded if and only if given $u\in L_+^{\alpha(\cdot)}(\Omega_1, \nu)$ there exist $w\in L_+^{\alpha(\cdot)}(\Omega_1, \nu)$ as in Proposition~\ref{coro peso unificado} such that $T\colon L^r(w\,d\nu)\to L^r(w\,d\nu)$ is bounded (respectively, $T\colon L^r(w^{-1}d\nu)\to L^r(w^{-1}d\nu)$ is bounded).
\end{enumerate}
\end{corollary}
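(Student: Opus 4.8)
The plan is to derive Corollary~\ref{application weighted ineq} by feeding Theorem~\ref{teoDefJunVariable} into the abstract equivalence between $\ell^r$-valued and weighted $L^r$-inequalities recorded in Propositions~\ref{weighted norm thm} and~\ref{coro peso unificado}. The key observation is that, \emph{provided} $k_{q(\cdot),p(\cdot)}(r)<\infty$, a linear operator $T\colon L^{q(\cdot)}(\Omega_2,\mu)\to L^{p(\cdot)}(\Omega_1,\nu)$ satisfies \eqref{MZ p variable} (with some finite constant) if and only if it is bounded: if $T$ is bounded then \eqref{MZ p variable} holds with $C=k_{q(\cdot),p(\cdot)}(r)$, and conversely the case $N=1$ of \eqref{MZ p variable} already gives boundedness of $T$. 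Hence boundedness of $T$ is equivalent to \eqref{MZ p variable}, which by Proposition~\ref{weighted norm thm} (or by Proposition~\ref{coro peso unificado} when $q(\cdot)=p(\cdot)$) is equivalent to the weighted inequality in the statement. So in each of the three regimes the task reduces to two short verifications: (a) that the hypotheses on $p(\cdot),q(\cdot),r$ force $r\in I(p_-,q_+)$, so that Theorem~\ref{teoDefJunVariable} yields $k_{q(\cdot),p(\cdot)}(r)<\infty$; and (b) that the structural hypotheses of Proposition~\ref{weighted norm thm} (or~\ref{coro peso unificado}) hold.

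For item (i), the hypothesis $2\le r<\min\{p_-,q_-\}$ yields $r<p_-\le p(\cdot)$ $\nu$-a.e.\ and $r<q_-\le q(\cdot)$ $\mu$-a.e., which are exactly the hypotheses of Proposition~\ref{weighted norm thm}(i), and $1<r<\infty$ is clear. That $r\in I(p_-,q_+)$ follows from a brief check: since $r\ge 2$ we have $p_->r\ge 2$ and $q_+\ge q_->2$, and inspecting the definition of $I(p_-,q_+)$ one sees that $I(p_-,q_+)\supseteq[2,p_-)$ in both sub-cases $p_-<q_+$ and $p_-\ge q_+$; as $2\le r<p_-$ we conclude $r\in[2,p_-)\subseteq I(p_-,q_+)$. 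Therefore $k_{q(\cdot),p(\cdot)}(r)<\infty$ by Theorem~\ref{teoDefJunVariable}, and the assertion of (i) follows by chaining: $T$ bounded $\iff$ \eqref{MZ p variable} $\iff$ the weighted $L^r$-inequality of Proposition~\ref{weighted norm thm}(i).

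Item (ii) is entirely parallel. From $\max\{p_+,q_+\}<r\le 2$ we get $r>p_+\ge p(\cdot)$ and $r>q_+\ge q(\cdot)$ a.e.\ (and $r>p_+\ge p_->1$), matching Proposition~\ref{weighted norm thm}(ii); moreover $p_-\le p_+<2$ and $q_-\le q_+<2$, so a similar inspection of the definition gives $I(p_-,q_+)\supseteq(q_+,2]$, and this interval contains $r$ because $r>\max\{p_+,q_+\}\ge q_+$ and $r\le 2$. Hence $k_{q(\cdot),p(\cdot)}(r)<\infty$ again by Theorem~\ref{teoDefJunVariable}, and Proposition~\ref{weighted norm thm}(ii) completes the equivalence. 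For item (iii), specialize (i) and (ii) to $q(\cdot)=p(\cdot)$ --- so that $\min\{p_-,q_-\}=p_-$ and $\max\{p_+,q_+\}=p_+$ --- and replace Proposition~\ref{weighted norm thm} by Proposition~\ref{coro peso unificado}, whose iteration collapses the pair of weights $u,U$ into a single weight $w$ with $u\le w$ $\nu$-a.e.\ and $\|w\|_{\alpha(\cdot)}\le\max\{2,2^{r/r'}\}\,\|u\|_{\alpha(\cdot)}$.

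Essentially all of this is bookkeeping layered on top of Theorem~\ref{teoDefJunVariable} and the factorization results of~\cite{Gar}; the only place needing a little care is the verification that the prescribed ranges of $(p_-,q_+,r)$ lie inside $I(p_-,q_+)$, a short argument organized by the signs of $p_--2$ and $q_+-2$. I do not anticipate any serious obstacle beyond that.
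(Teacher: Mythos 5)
Your proposal is correct and follows exactly the route the paper intends: the corollary is stated as a direct consequence of Theorem~\ref{teoDefJunVariable} combined with Propositions~\ref{weighted norm thm} and~\ref{coro peso unificado}, and your verifications that the hypotheses force $r\in I(p_-,q_+)$ (hence $k_{q(\cdot),p(\cdot)}(r)<\infty$) and that they match the structural assumptions of the factorization results are the only details that need checking. Nothing further is required.
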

We remark that there are situations  other than those stated in Corollary \ref{application weighted ineq} in which the characterization of the boundedness of $T$ in terms of weighted $L^r$-inequalities holds. Particularly, this happens in cases where the extreme values of $p(\cdot)$ or $q(\cdot)$ are not attained, for example if  $2<q_-<q(\cdot)<p(\cdot)$ almost everywhere and $2\le r\le \min\{p_-, q_-\}$.

\subsection{Vector-valued extensions of fractional operators}
Vector-valued inequalities for singular operators are a widely studied area in harmonic analysis. It is well-known that many singular classical operators (such as Calder\'on-Zygmund operators or fractional integral operators) have $\ell^r$-valued extensions (that is, equation \eqref{MZ} holds) for every $1\leq r<\infty$. These vector-valued extensions hold even in the weighted case, for weights in the classical Muckenhoupt classes, as an immediate consequence of extrapolation results (see, for instance, \cite{CruMarPer2} and references therein). In the variable exponent setting, despite being a much more recent research area, the study of boundedness and vector-valued extensions of singular operators is very rich. In \cite{CruWang}, Cruz-Uribe and Wang proved extrapolation results in variable Lebesgue spaces, including a generalization of the classical Rubio de Francia extrapolation theorem, off-diagonal and limited range extrapolation. As in the classical scenario, these extrapolations results give rise to $\ell^r$-valued inequalities for a wide class of singular operators between variable Lebesgue spaces. There exist, however, some singular operators which do not fit in this context. In \cite{RafSam}, the authors consider fractional integral operators of variable order $\alpha(\cdot)$ with rough kernels. These operators are defined by
$$
I_{\Omega}^{\alpha(\cdot)}f(x)=\int_{\mathbb{R}^n} \frac{\Omega(x-y)}{|x-y|^{n-\alpha(x)}}f(y)\,dy,
$$
where $\Omega\in L^1(\mathbb{S}^{n-1})$ is a homogeneous function of degree zero. In \cite[Theorem~4.4]{RafSam} it is proved that if $U\subset \mathbb{R}^n$ is a bounded open set, $\Omega\in L^s(\mathbb{S}^{n-1})$, $1<s<\infty$, $q(\cdot)\in \mathcal{P}_b(U)$ satisfy the $\log$-condition
\begin{equation}\label{log-condition}
|q(x)-q(y)|\leq \frac{A}{-\log\|x-y\|}, \quad \text{$\|x-y\|\leq \frac{1}{2}$, $x, y\in U$}
\end{equation}
for some constant $A>0$ depending only on $q(\cdot)$, and
$$
\inf_{x\in U}\alpha(x)>0, \quad \sup_{x\in U}\alpha(x)q(x)<n, \quad \text{and}\quad s\geq (q'(\cdot))_+,
$$
then the operator $I_\Omega^{\alpha(\cdot)}$ is bounded from $L^{q(\cdot)}(U)$ into $L^{p(\cdot)}(U)$, where $\frac{1}{p(\cdot)}=\frac{1}{q(\cdot)}-\frac{\alpha(\cdot)}{n}$. The approach in \cite{RafSam} is based on some pointwise estimates and do not use extrapolation results. This, precisely, allows to consider fractional operators of variable order $\alpha(\cdot)$, for which the off-diagonal extrapolation theorems do not work. Now, as a consequence of the result quoted above and our Theorem~\ref{teoDefJunVariable}, we obtain the following vector-valued inequalities for fractional operators of variable order.
\begin{corollary}\label{vector valued fractional}
Let $U\subset \mathbb{R}^n$ be a bounded open set, $\Omega\in L^s(\mathbb{S}^{n-1})$, $1<s<\infty$, a homogeneous function of degree zero and $q(\cdot)\in \mathcal{P}_b(U)$ satisfying the $\log$-condition \eqref{log-condition}.
If
$$
\inf_{x\in U}\alpha(x)>0, \quad \sup_{x\in U}\alpha(x)q(x)<n,\quad s\geq (q'(\cdot))_+ \quad \text{and}\quad \frac{1}{p(\cdot)}=\frac{1}{q(\cdot)}-\frac{\alpha(\cdot)}{n},
$$
then the operator $I_\Omega^{\alpha(\cdot)}$ satisfies the vector-valued inequalities
$$
 \left\| \left(\sum_{k=1}^N |I_\Omega^{\alpha(\cdot)}(f_k)|^r\right)^{1/r} \right\|_{L^{p(\cdot)}(U)}  \leq C \left\| \left(\sum_{k=1}^N |f_k|^r\right)^{1/r} \right\|_{L^{q(\cdot)}(U)}
$$
for every $r\in I(p_-, q_+)$, every $(f_k)_{k=1}^N\subset L^{q(\cdot)}(U)$ and every $N\in \mathbb{N}$.
\end{corollary}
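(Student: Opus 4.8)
The plan is to derive the asserted vector-valued inequality by feeding the scalar boundedness of $I_\Omega^{\alpha(\cdot)}$ from \cite[Theorem~4.4]{RafSam} into Theorem~\ref{teoDefJunVariable}; essentially the only thing to check is that the exponent pair $(q(\cdot),p(\cdot))$ falls under the hypotheses of that theorem. First I would note that $U$ equipped with Lebesgue measure is a non-atomic measure space and that $q(\cdot)\in\mathcal{P}_b(U)$ by assumption, so what remains is to verify that the target exponent $p(\cdot)$, given by $\frac{1}{p(\cdot)}=\frac{1}{q(\cdot)}-\frac{\alpha(\cdot)}{n}$, also lies in $\mathcal{P}_b(U)$.

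For this I would argue as follows. Writing $\alpha_-:=\inf_{x\in U}\alpha(x)>0$, for a.e.\ $x\in U$ one has $\frac{1}{p(x)}=\frac{1}{q(x)}-\frac{\alpha(x)}{n}\le\frac{1}{q_-}-\frac{\alpha_-}{n}<\frac{1}{q_-}<1$ (the quantity $\frac{1}{q_-}-\frac{\alpha_-}{n}$ being positive since it dominates $\frac{1}{p(x)}>0$), so $p_-\ge\bigl(\frac{1}{q_-}-\frac{\alpha_-}{n}\bigr)^{-1}>1$. For the upper bound, the hypothesis $\sup_{x\in U}\alpha(x)q(x)<n$ being \emph{strict} is exactly what is needed: setting $\delta:=n-\sup_{x\in U}\alpha(x)q(x)>0$ we get $\alpha(x)q(x)\le n-\delta$ a.e., hence $\frac{\alpha(x)}{n}\le\frac{1}{q(x)}-\frac{\delta}{nq(x)}$ and so $\frac{1}{p(x)}\ge\frac{\delta}{nq(x)}\ge\frac{\delta}{nq_+}>0$, giving $p_+\le\frac{nq_+}{\delta}<\infty$. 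Thus $p(\cdot)\in\mathcal{P}_b(U)$.

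Then I would conclude as follows: by \cite[Theorem~4.4]{RafSam}, under the stated hypotheses (including the $\log$-condition \eqref{log-condition} and $s\ge(q'(\cdot))_+$), $I_\Omega^{\alpha(\cdot)}$ is a bounded linear operator from $L^{q(\cdot)}(U)$ into $L^{p(\cdot)}(U)$. Applying Theorem~\ref{teoDefJunVariable} with $(\Omega_1,\nu)=(\Omega_2,\mu)=(U,dx)$ — legitimate since both exponents lie in $\mathcal{P}_b(U)$ and $(U,dx)$ is non-atomic — we get $k_{q(\cdot),p(\cdot)}(r)<\infty$ for every $r\in I(p_-,q_+)$. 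By the very definition of $k_{q(\cdot),p(\cdot)}(r)$, its finiteness means that \eqref{MZ p variable} holds for every bounded $T\colon L^{q(\cdot)}(U)\to L^{p(\cdot)}(U)$; applying this to $T=I_\Omega^{\alpha(\cdot)}$ yields the claimed inequality with $C=k_{q(\cdot),p(\cdot)}(r)\,\|I_\Omega^{\alpha(\cdot)}\|_{L^{q(\cdot)}(U)\to L^{p(\cdot)}(U)}$, a constant independent of $N$ and of $(f_k)_{k=1}^N$.

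I do not expect a serious obstacle: the corollary is a composition of an external boundedness theorem with our Theorem~\ref{teoDefJunVariable}. The one delicate point is the membership $p(\cdot)\in\mathcal{P}_b(U)$ — more precisely the bound $p_+<\infty$, which rests on the strictness of $\sup_{x\in U}\alpha(x)q(x)<n$. (Note also that $r=\infty$ never arises here, since $I(p_-,q_+)\subset[1,\max\{2,p_-\}]$ and $p_-<\infty$, so all the $\ell^r$-sums in the statement are genuine.)
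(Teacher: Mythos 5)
Your proposal is correct and follows exactly the route the paper intends (the paper gives no explicit proof, simply invoking the boundedness of $I_\Omega^{\alpha(\cdot)}$ from \cite[Theorem~4.4]{RafSam} together with Theorem~\ref{teoDefJunVariable}). Your additional verification that $p(\cdot)\in\mathcal{P}_b(U)$ — in particular that $p_+<\infty$ follows from the strict inequality $\sup_{x\in U}\alpha(x)q(x)<n$ — is a correct and worthwhile detail that the paper leaves implicit.
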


\begin{remark}\rm
In \cite[Section 4]{Izu}, the author shows conditions ensuring the existence of vector-valued extensions of operators on Herz spaces.
Introduced by Herz \cite{Her} in his study of absolutely convergent Fourier transforms, these spaces are closely related to the study of functions and multipliers on classical Hardy spaces. By analogy with the definition of the classical Herz spaces, Izuki defines in \cite{Izu} the homogeneous and non-homogeneous Herz spaces with variable exponent, $\dot{K}_{p(\cdot)}^{\alpha, q}(\mathbb{R}^n)$ and $K_{p(\cdot)}^{\alpha, q}(\mathbb{R}^n)$. Combining our Theorem \ref{teoDefJunVariable} and \cite[Theorem 4.1]{Izu} we obtain the following result:
let $p(\cdot)\in \mathcal{P}_b(\mathbb R^n)$ be such that the Hardy-Littlewood maximal operator is bounded on $L^{p(\cdot)}(\mathbb{R}^n)$, $r\in I(p_-,p_+)$ and $\alpha\in (-n \delta_1, n \delta_2) $ (for certain $\delta_1, \delta_2$ specified in \cite[Section~3]{Izu}).
If $T$ is a bounded operator on $L^{p(\cdot)}(\mathbb R^n)$ satisfying the very general size condition
\begin{equation}\label{size condition}
|Tf(x)|\le C \int_{\mathbb R^n}|x-y|^{-n} |f(y)| dy
\end{equation}
for all $f\in L^{1}(\mathbb R^n)$, then $T$ admits a vector-valued extension on $K^{\alpha, q}_{p(\cdot)}(\mathbb R^n,\ell^r)$. In other words, there exists a constant $\widetilde C >0$ such that
\begin{equation*}
 \left\| \left(\sum_{k=1}^N |T(f_k)|^r\right)^{1/r} \right\|_{K^{\alpha, q}_{p(\cdot)}(\mathbb R^n)}  \leq \widetilde C \|T\| \left\| \left(\sum_{k=1}^N |f_k|^r\right)^{1/r} \right\|_{K^{\alpha, q}_{p(\cdot)}(\mathbb R^n)}.
\end{equation*}
It is worth noting that condition \eqref{size condition} is satisfied by many operators in harmonic analysis, such as Calder\'on-Zygmund operators, strongly singular multiplier operators and Bochner-Riesz means at the critical index (see, for instance, \cite{SorWei}).
\end{remark}

\section{Marcinkiewicz-Zygmund inequalities revisited: measure spaces with atoms}\label{Oliver Atom}
In Section~\ref{MaZy variable}, for $p(\cdot)\in \mathcal{P}_b(\Omega_1,\nu)$, $q(\cdot)\in \mathcal{P}_b(\Omega_2,\mu)$,  $\Omega_1$ and $\Omega_2$ non-atomic, we characterize  $1\leq r\leq \infty$ such that $k_{q(\cdot), p(\cdot)}(r)<\infty$  in terms of $p_-$ and $q_+$. As we will see below, if the underlying measure spaces have atoms, the range of $r$'s such that $k_{q(\cdot), p(\cdot)}(r)<\infty$ can be {larger} than in the non-atomic case. This is not an unexpected result: consider a measure space $(\Omega_2, \mu)$ which has an atom $\{\omega_0\}$ and such that $\Omega_2\setminus \{\omega_0\}$ is atomless.  If $q\colon \Omega_2 \to [1, \infty]$ is such that $q(\omega_0) = 2$ and
$$\esssup_{x\in \Omega\setminus \{\omega_0\}}q(x) =\frac32,$$
it is reasonable to expect that \emph{every} operator $T\colon L^{q(\cdot)}(\Omega_2, \mu)\to L^{2}(\mathbb{R}, dx)$ has bounded $\ell^r$-valued extension if and only if $r\in I(2,  3 / 2)=[3/2,2) \supsetneq \{2\}= I(2, q_+)$. This is, indeed, the case, since the boundedness of $T\colon L^{q(\cdot)}(\Omega_2, \mu)\to L^{2}(\mathbb{R}, dx)$ and its vector-valued extensions do not depend on the behavior of the exponents on finitely many atoms.
With this in mind, given exponents $p(\cdot)\in \mathcal{P}_b(\Omega_1,\nu)$, $q(\cdot)\in \mathcal{P}_b(\Omega_2,\mu)$ we define
$$
\widetilde{p}_-=\sup\{t:\,\, \{p(\cdot)<t\}\,\, \text{is finite (or null) union of atoms}\}
$$
and
$$
\widetilde{q}_+=\inf\{t:\,\, \{q(\cdot)>t\}\,\, \text{is finite (or null) union of atoms}\}.
$$
It is easy to check that $p_-\leq \widetilde{p}_-$ and $\widetilde{q}_+\leq q_+$.

The existence of $\ell^r$-valued extensions of an operator $T\colon  L^{q(\cdot)}(\Omega_2, \mu)\to L^{p(\cdot)}(\Omega_1, \nu)$ in the case of measures with atoms follows from the corresponding extensions of some restrictions of $T$. In this direction, we state the following lemma which is easy to verify.

\begin{lemma}\label{extension sum operators}
Suppose that the operators $T_1, T_2\colon  L^{q(\cdot)}(\Omega_2, \mu)\to L^{p(\cdot)}(\Omega_1, \nu)$ admit bounded $\ell^r$-valued extensions $\widetilde{T}_1$ and $\widetilde{T}_2$ with norms $\|\widetilde{T}_1\|\leq C\|T_1\|$ and $\|\widetilde{T}_2\|\leq C\|T_2\|$ for some constant $C\geq 1$. Then $T_1+T_2$ admit a bounded $\ell^r$-valued extension $\widetilde{T_1+T_2}$ with norm $\|\widetilde{T_1+T_2}\|\leq C\left(\|T_1\|+\|T_2\|\right)$.
\end{lemma}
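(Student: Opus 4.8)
The plan is to observe that the natural $\ell^r$-valued extension of $T_1+T_2$ is literally the sum of the extensions $\widetilde{T}_1$ and $\widetilde{T}_2$, and then invoke the triangle inequality for operator norms. First I would note that, for any finite sequence $(f_k)_k\subset L^{q(\cdot)}(\Omega_2,\mu)$,
$$
\widetilde{T_1+T_2}\big((f_k)_k\big)=\big((T_1+T_2)(f_k)\big)_k=\big(T_1(f_k)\big)_k+\big(T_2(f_k)\big)_k=\widetilde{T}_1\big((f_k)_k\big)+\widetilde{T}_2\big((f_k)_k\big),
$$
so that $\widetilde{T_1+T_2}=\widetilde{T}_1+\widetilde{T}_2$ on the dense subspace of finitely supported sequences of $L^{q(\cdot)}$ functions inside $L^{q(\cdot)}(\mu,\ell^r)$.

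Next, since by hypothesis $\widetilde{T}_1,\widetilde{T}_2\colon L^{q(\cdot)}(\mu,\ell^r)\to L^{p(\cdot)}(\nu,\ell^r)$ are bounded, their sum is bounded with $\|\widetilde{T}_1+\widetilde{T}_2\|\le\|\widetilde{T}_1\|+\|\widetilde{T}_2\|$. Concretely, at the level of inequality \eqref{MZ p variable} one uses Minkowski's inequality in $\ell^r$ (valid since $1\le r\le\infty$) to bound
$$
\Big(\sum_k|(T_1+T_2)(f_k)|^r\Big)^{1/r}\le\Big(\sum_k|T_1(f_k)|^r\Big)^{1/r}+\Big(\sum_k|T_2(f_k)|^r\Big)^{1/r}
$$
pointwise $\nu$-a.e., and then applies the monotonicity of the lattice norm $\|\cdot\|_{L^{p(\cdot)}(\Omega_1,\nu)}$ followed by its own triangle inequality. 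Combining this with the assumptions $\|\widetilde{T}_i\|\le C\|T_i\|$ yields $\|\widetilde{T_1+T_2}\|\le C\big(\|T_1\|+\|T_2\|\big)$, and boundedness on the dense subspace lets one extend $\widetilde{T_1+T_2}$ uniquely to all of $L^{q(\cdot)}(\mu,\ell^r)$.

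There is essentially no obstacle here: the only points worth keeping in mind are that the pointwise $\ell^r$-triangle inequality requires $r\ge1$, which is the standing assumption, and that $L^{p(\cdot)}(\Omega_1,\nu)$ is a Banach function lattice, so the monotonicity step is legitimate. This is why the lemma is stated as being easy to verify.
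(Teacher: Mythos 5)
Your proof is correct and is precisely the argument the paper has in mind: the paper omits the proof entirely, remarking only that the lemma is ``easy to verify,'' and your observation that the natural extension of $T_1+T_2$ coincides with $\widetilde{T}_1+\widetilde{T}_2$ on the dense subspace of finitely supported sequences, followed by the triangle inequality for operator norms, is the intended verification. The pointwise Minkowski step is a slightly more explicit (but harmless) way of saying the same thing.
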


Now we are ready to state and prove the main result of this section.

\begin{theorem}\label{teoDefJunAtomic}
Let $(\Omega_1,\nu)$ and $(\Omega_2,\mu)$ be measure spaces which are not union of finitely many atoms and let $p(\cdot)\in \mathcal{P}_b(\Omega_1,\nu)$, $q(\cdot)\in \mathcal{P}_b(\Omega_2,\mu)$ and $1\leq r\leq\infty$. If $k_{q(\cdot), p(\cdot)}(r)<\infty$ then $r \in I(\widetilde{p}_-,\widetilde{q}_+)$. Conversely,  $r \in {\rm int}({I}(\widetilde{p}_-,\widetilde{q}_+))\cup \{2\}$, then $k_{q(\cdot), p(\cdot)}(r)<\infty$.
\end{theorem}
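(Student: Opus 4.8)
The plan is to deduce both implications from the constant-exponent theory, i.e.\ from Theorem~\ref{teoDefJun} together with Lemmas~\ref{lema 1} and~\ref{lema 2}, the monotonicity of Theorem~\ref{decreasing and increasing} and the duality of Proposition~\ref{duality formula variable}, by isolating the finitely many atoms on which $q(\cdot)$ is ``too large'' or $p(\cdot)$ is ``too small''. The structural fact behind the definitions of $\widetilde{p}_-$ and $\widetilde{q}_+$ is that a measurable subset of a finite union of atoms is, up to null sets, again a finite union of atoms; hence $\{t:\{p(\cdot)<t\}\text{ is a finite/null union of atoms}\}$ is a half-line with supremum $\widetilde{p}_-$ and $\{t:\{q(\cdot)>t\}\text{ is a finite/null union of atoms}\}$ is a half-line with infimum $\widetilde{q}_+$. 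Consequently, for every $\epsilon>0$ the sets $\{p(\cdot)<\widetilde{p}_-+\epsilon\}$ and $\{q(\cdot)>\widetilde{q}_+-\epsilon\}$ are \emph{not} unions of finitely many atoms, while for every $\delta>0$ the sets $\{p(\cdot)<\widetilde{p}_--\delta\}$ and $\{q(\cdot)>\widetilde{q}_++\delta\}$ \emph{are}; moreover $1<p_-\le\widetilde{p}_-$ and $\widetilde{q}_+\le q_+<\infty$.

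\emph{Necessity.} This follows the proof of Theorem~\ref{teoDefJunVariable}. Given $\epsilon>0$ small, restrict the exponents to $\Omega_1^{\epsilon}:=\{p(\cdot)<\widetilde{p}_-+\epsilon\}$ and $\Omega_2^{\epsilon}:=\{q(\cdot)>\widetilde{q}_+-\epsilon\}$; since these subsets are not unions of finitely many atoms, Lemma~\ref{lema 2} is available on them, and chaining it with the monotonicity in $q(\cdot)$ and in $p(\cdot)$ (Theorem~\ref{decreasing and increasing}) and with Lemma~\ref{lema 1} gives $k_{\widetilde{q}_+-\epsilon,\,\widetilde{p}_-+\epsilon}(r)\lesssim k_{q(\cdot),p(\cdot)}(r)<\infty$. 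By Theorem~\ref{teoDefJun} this forces $r\in\bigcap_{\epsilon>0}I(\widetilde{p}_-+\epsilon,\widetilde{q}_+-\epsilon)$, and then the case analysis of the proof of Theorem~\ref{teoDefJunVariable}, with $p_-,q_+$ replaced by $\widetilde{p}_-,\widetilde{q}_+$, yields $r\in I(\widetilde{p}_-,\widetilde{q}_+)$: when $\widetilde{p}_-<\widetilde{q}_+<2$ the endpoint $r=\widetilde{q}_+$ is excluded through $k_{\widetilde{q}_+-\epsilon,\,\widetilde{p}_-+\epsilon}(\widetilde{q}_+)=c_{\widetilde{q}_+,\widetilde{q}_+-\epsilon}/c_{\widetilde{q}_+,\widetilde{p}_-+\epsilon}\to\infty$, when $2<\widetilde{p}_-<\widetilde{q}_+$ one dualizes via Proposition~\ref{duality formula variable} (using $\widetilde{(p')}_+=(\widetilde{p}_-)'$ and $\widetilde{(q')}_-=(\widetilde{q}_+)'$), and the ranges straddling $2$ force $r=2$.

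\emph{Sufficiency.} Suppose $r\in{\rm int}(I(\widetilde{p}_-,\widetilde{q}_+))\cup\{2\}$. Fix a small $\delta>0$ and split $\Omega_1=A_1\sqcup B_1$, $\Omega_2=A_2\sqcup B_2$ with $A_1=\{p(\cdot)<\widetilde{p}_--\delta\}$, $A_2=\{q(\cdot)>\widetilde{q}_++\delta\}$; then $L^{p(\cdot)}(A_1)$ and $L^{q(\cdot)}(A_2)$ are finite-dimensional, while $p(\cdot)\ge\widetilde{p}_--\delta$ on $B_1$ and $q(\cdot)\le\widetilde{q}_++\delta$ on $B_2$. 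Given $T\colon L^{q(\cdot)}(\Omega_2,\mu)\to L^{p(\cdot)}(\Omega_1,\nu)$, write $T=T_{11}+T_{12}+T_{21}+T_{22}$ where $T_{11}(f)=\chi_{A_1}T(\chi_{A_2}f)$, $T_{12}(f)=\chi_{A_1}T(\chi_{B_2}f)$, $T_{21}(f)=\chi_{B_1}T(\chi_{A_2}f)$, $T_{22}(f)=\chi_{B_1}T(\chi_{B_2}f)$, each of norm $\le\|T\|$; by Lemma~\ref{extension sum operators} it suffices to bound $\|\widetilde{T_{ij}}\|$ by a constant (depending only on the exponents and the measure spaces, not on $T$) times $\|T\|$. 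Each of $T_{11},T_{12},T_{21}$ factors, through a restriction and an isometric inclusion, through an operator $S$ of rank at most $\dim L^{p(\cdot)}(A_1)$ or $\dim L^{q(\cdot)}(A_2)$, with $\|S\|\le\|T\|$; writing $S=\sum_i\phi_i\otimes v_i$ in a fixed basis and invoking the elementary fact that a rank-one operator $\phi\otimes v$ from a Banach function space into a Banach lattice has $\|\widetilde{\phi\otimes v}\|\le\|\phi\|\,\|v\|$ for every $1\le r\le\infty$ — because $(\sum_k|\phi(f_k)v|^r)^{1/r}=(\sum_k|\phi(f_k)|^r)^{1/r}\,|v|$, and $\ell^r$--$\ell^{r'}$ duality over $k$ together with the pointwise bound $|\sum_k a_k f_k|\le(\sum_k|f_k|^r)^{1/r}$ for $\|(a_k)\|_{\ell^{r'}}\le1$ gives $(\sum_k|\phi(f_k)|^r)^{1/r}\le\|\phi\|\,\|(\sum_k|f_k|^r)^{1/r}\|$ — one obtains $\|\widetilde S\|\le\sum_i\|\phi_i\|\,\|v_i\|\le C\|T\|$.

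For the main piece $T_{22}$, regarded as an operator $L^{q(\cdot)}(B_2,\mu)\to L^{p(\cdot)}(B_1,\nu)$, parts $(iii)$--$(iv)$ of Theorem~\ref{decreasing and increasing} (using $p(\cdot)\ge\widetilde{p}_--\delta$ on $B_1$ and $q(\cdot)\le\widetilde{q}_++\delta$ on $B_2$) give $\|\widetilde{T_{22}}\|\lesssim k_{\widetilde{q}_++\delta,\,\widetilde{p}_--\delta}(r)\,\|T\|$, and by Theorem~\ref{teoDefJun} the right-hand side is finite provided $r\in I(\widetilde{p}_--\delta,\widetilde{q}_++\delta)$ (note $1<\widetilde{p}_--\delta$ and $\widetilde{q}_++\delta<\infty$ for small $\delta$). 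This last condition holds, for $\delta$ small, whenever $r=2$ — as $2\in I(a,b)$ for all $a,b$ — and whenever $r\in{\rm int}(I(\widetilde{p}_-,\widetilde{q}_+))$, by a short case analysis on the position of $\widetilde{p}_-,\widetilde{q}_+$ with respect to $2$; this is exactly why the statement requires the interior of $I(\widetilde{p}_-,\widetilde{q}_+)$ together with the point $r=2$. Choosing $\delta$ accordingly, all four pieces are controlled and $k_{q(\cdot),p(\cdot)}(r)<\infty$. I expect the main obstacle to be precisely this sufficiency argument: the extrema defining $\widetilde{p}_-,\widetilde{q}_+$ need not be attained, which forces cutting the bad sets out with a strict $\delta$-shift and makes the comparison with constant exponents reach only the \emph{open} interval ${\rm int}(I(\widetilde{p}_-,\widetilde{q}_+))$, so that $r=2$ must be recovered separately; verifying the perturbation-stability $r\in I(\widetilde{p}_--\delta,\widetilde{q}_++\delta)$ and handling the finite-dimensional pieces uniformly in $T$ via the rank-one estimate are the points that need care.
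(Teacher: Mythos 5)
Your proof is correct and follows essentially the same route as the paper's: the necessity part restricts to the non-atomic-in-the-limit sets $\{p(\cdot)<\widetilde{p}_-+\epsilon\}$, $\{q(\cdot)>\widetilde{q}_+-\epsilon\}$ and reduces to the constant-exponent case exactly as in the paper, and the sufficiency part cuts out the finitely many exceptional atoms and splits $T$ into a main piece controlled by $k_{\widetilde{q}_++\delta,\widetilde{p}_--\delta}(r)$ plus finite-rank pieces, which is the paper's three-term decomposition written as four terms. The only difference is that you spell out the rank-one estimate $\|\widetilde{\phi\otimes v}\|\le\|\phi\|\,\|v\|$ for the finite-dimensional pieces, a point the paper dismisses as clear.
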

As we will see in Remark~\ref{ejemplos borde}, both implications in Theorem~\ref{teoDefJunAtomic} are sharp. As in the proof of Theorem~\ref{teoDefJunVariable} we use $\lesssim$ to mean that an inequality holds up to a universal constant.

\begin{proof}[Proof of Theorem~\ref{teoDefJunAtomic}]
Suppose first that $k_{q(\cdot), p(\cdot)}(r)<\infty$ and let $\epsilon>0$. Given $\epsilon>0$ consider the subsets
\begin{equation*}
		\Omega_1^{\widetilde{p}_-+\epsilon}=\lbrace x\in\Omega_1:\,\,p(x)<\widetilde{p}_-+\epsilon\rbrace
	\quad
	\text{and}
	\quad
			\Omega_2^{\widetilde{q}_+-\epsilon}=\lbrace x\in\Omega_2:\,\,q(x)>\widetilde{q}_+-\epsilon\rbrace
	\end{equation*}
(we follow the notation in the proof of Theorem~\ref{teoDefJunVariable}, where $\Omega_1^{p_-+\epsilon}$ and $\Omega_2^{q_+-\epsilon}$ were defined analogously).	
Since $p_-\leq \widetilde{p}_-$ and $\widetilde{q}_+\leq q_+$ we have $\Omega_1^{p_-+\epsilon}\subseteq \Omega_1^{\widetilde{p}_-+\epsilon}$ and $\Omega_2^{q_+-\epsilon}\subseteq \Omega_2^{\widetilde{q}_+-\epsilon}$ and, consequently, $\nu(\Omega_1^{\widetilde{p}_-+\epsilon})>0$ and $\mu(\Omega_2^{\widetilde{q}_+-\epsilon})>0$.
Note also that $(\Omega_1^{\widetilde{p}_-+\epsilon}, \nu)$ and $(\Omega_2^{\widetilde{q}_+-\epsilon}, \mu)$ are not union of finitely many atoms (by definition of $\widetilde{p}_-$ and $\widetilde{q}_+$). By Lemma~\ref{lema 2}, monotonicity and Lemma~\ref{lema 1} we have
\begin{eqnarray*}
\nonumber k_{\widetilde{q}_+-\epsilon, \widetilde{p}_-+\epsilon}(r)&=&k_{L^{\widetilde{q}_+-\epsilon}(\Omega_2^{\widetilde{q}_+-\epsilon}), L^{\widetilde{p}_-+\epsilon}(\Omega_1^{\widetilde{p}_-+\epsilon})}(r)\\
\nonumber &\lesssim& k_{L^{q(\cdot)}(\Omega_2^{\widetilde{q}_+-\epsilon}), L^{p(\cdot)}(\Omega_1^{\widetilde{p}_-+\epsilon})}(r)\\
&\lesssim&  k_{L^{q(\cdot)}(\Omega_2), L^{p(\cdot)}(\Omega_1)}(r)<\infty
\end{eqnarray*}
and, since $\epsilon>0$ was arbitrary,
$$
r\in\bigcap_{\epsilon>0}I(\widetilde{p}_-+\epsilon,\widetilde{q}_+-\epsilon).
$$
Then, reasoning as in the proof Theorem~\ref{teoDefJunVariable} we obtain $r \in I(\widetilde{p}_-,\widetilde{q}_+)$.

Suppose now that $r \in {\rm int}({I}(\widetilde{p}_-,\widetilde{q}_+))\cup \{2\}$. Then we have $r \in I(\widetilde{p}_--\epsilon,\widetilde{q}_++\epsilon)$ for some $\epsilon>0$. For this fixed $\epsilon>0$ consider
$$
\Theta_1=\{x\in \Omega_1:\,\, p(x)>\widetilde{p}_--\epsilon\} \quad \text{and}\quad \Theta_2=\{x\in \Omega_2:\,\, q(x)<\widetilde{q}_+ +\epsilon\}
$$
and note that, by definition of $\widetilde{p}_-$ and $\widetilde{q}_+$, their complements $\Theta_1^c$ and $\Theta_2^c$ are finite (or null) unions of atoms. Consequently, $\Theta_1$ and $\Theta_2$ are not union of finitely many atoms. Now, given $T\colon L^{q(\cdot)}(\Omega_2, \mu)\to L^{p(\cdot)}(\Omega_1, \nu)$ we can write
$$
T=T_1+T_2+T_3,
$$
where $T_i\colon L^{q(\cdot)}(\Omega_2, \mu)\to L^{p(\cdot)}(\Omega_1, \nu)$, $i=1,2,3$, are given by
$$
T_1(f)= T\left(f\cdot \chi_{\Theta_2}\right)\cdot \chi_{\Theta_1}, \quad T_2(f)= T\left(f\cdot \chi_{\Theta_2}\right)\cdot \chi_{\Theta_1^c}\quad \text{and}\quad T_3(f)=T\left(f\cdot \chi_{\Theta_2^c}\right).
$$
Let us prove that $T_1, T_2$ and $T_3$ have bounded $\ell^r$-valued extension. First, since $r \in I(\widetilde{p}_--\epsilon,\widetilde{q}_++\epsilon)$, by monotonicity and Lemma~\ref{lema 2}   we have
$$
k_{L^{q(\cdot)}(\Theta_2), L^{p(\cdot)}(\Theta_1)}(r)\lesssim  k_{L^{\widetilde{q}_++\epsilon}(\Theta_2), L^{\widetilde{p}_--\epsilon}(\Theta_1)}(r)= k_{\widetilde{q}_++\epsilon, \widetilde{p}_--\epsilon}(r) <\infty
.$$
Note also that
$T_1\vert_{L^{q(\cdot)}(\Theta_2, \mu)} $ continuously maps $L^{q(\cdot)}(\Theta_2, \mu)$ in $ L^{p(\cdot)}(\Theta_1, \nu)$.
Then, given $(f_k)_{k=1}^N\subset L^{q(\cdot)}(\Omega_2, \mu)$ and $r \in I(\widetilde{p}_--\epsilon,\widetilde{q}_++\epsilon)$ we have
\begin{eqnarray*}
 \left\| \left(\sum_{k=1}^N |T_1(f_k)|^r\right)^{1/r} \right\|_{L^{p(\cdot)}(\Omega_1, \nu)}&=& \left\| \left(\sum_{k=1}^N |T_1\vert_{L^{q(\cdot)}(\Theta_2, \mu)}(f_k\cdot \chi_{\Theta_2})|^r\right)^{1/r} \right\|_{L^{p(\cdot)}(\Theta_1, \nu)}\\
&\lesssim&  k_{\widetilde{q}_++\epsilon, \widetilde{p}_--\epsilon}(r) \|T_1\|
\left\| \left(\sum_{k=1}^N |f_k\cdot \chi_{\Theta_2}|^r\right)^{1/r} \right\|_{L^{q(\cdot)}(\Theta_2, \nu)}\\
&\lesssim& k_{\widetilde{q}_++\epsilon, \widetilde{p}_--\epsilon}(r) \|T_1\|
\left\| \left(\sum_{k=1}^N |f_k|^r\right)^{1/r} \right\|_{L^{q(\cdot)}(\Omega_{2}, \nu)}.
\end{eqnarray*}
 This proves that, if $r \in {\rm int}({I}(\widetilde{p}_-,\widetilde{q}_+))\cup \{2\}$, then $T_1$ has bounded $\ell^r$-valued extension $\widetilde{T}_1$ with norm $\|\widetilde{T}_1\|\lesssim k_{\widetilde{q}_++\epsilon, \widetilde{p}_--\epsilon}(r) \|T_1\|$.

 On the other hand,  $T_2$ and $T_3$ are, essentially,  operators with finite-dimensional co-domain and finite-dimensional domain, respectively. Then,  $T_2$ and $ T_3$ clearly have bounded $\ell^r$-valued extension $\widetilde{T}_1$ and $\widetilde{T}_2$. Therefore,  $\widetilde T=\widetilde T_1+\widetilde T_2+\widetilde T_3$ is a bounded $\ell^r$-valued extension of $T$. Since $T$ was arbitrary, we have
$
k_{q(\cdot), p(\cdot)}(r) <\infty
$
for every $r \in {\rm int}({I}(\widetilde{p}_-,\widetilde{q}_+))\cup \{2\}$.
\end{proof}

\begin{remark}\label{ejemplos borde}\rm
Let us see that both implications in the statement in Theorem~\ref{teoDefJunAtomic} are sharp. On the one hand, for $(\Omega_1,\nu)$ and $(\Omega_2,\mu)$ non-atomic measure spaces, we  know (by Theorem~\ref{teoDefJunVariable}) that if $k_{q(\cdot), p(\cdot)}(r)<\infty$ then  $r \in I(\widetilde{p}_-,\widetilde{q}_+)=I(p_-, q_+)$.
On the other hand, to see that the second implication is also sharp, consider
$\Omega_1=\Omega_2=\mathbb{N}$ with the counting measure $\mu$. We choose $1<q_0<p_0<2$ and take  $(\lambda_n)_n$ strictly decreasing to $0$ such that
$n^{\lambda_n}\to \infty$. Let $p(\cdot)\in \mathcal{P}_b(\mathbb{N},\mu)$ be the constant exponent  $p(n)= p_0$ for every $n\in \mathbb{N}$ and define $q(\cdot)\in \mathcal{P}_b(\mathbb{N},\mu)$  by
$
q(n)=q_0+\lambda_n
$.
Note that $\widetilde{q}_+=q_0$, and let us show that
$k_{q(\cdot), p_0}(q_0)=\infty$. First note, by Lemma~\ref{lema 1} and monotonicity, that
\begin{equation}\label{contraejemplo}
k_{q(\cdot), p_0}(q_0)\geq k_{L^{q(\cdot)}(\{1,\dots, n\}, \mu), \ell_{p_0}^n}(q_0)\gtrsim k_{\ell_{q_0+\lambda_n}^n, \ell_{p_0}^n}(q_0),
\end{equation}
where we write $\ell_p^n$ for $(\mathbb{R}^n, \|\cdot\|_p)$. Now, by \cite[Section 6]{DefJun} we know that
$$
k_{\ell_{q_0+\lambda_n}^n, \ell_{p_0}^n}(q_0)\asymp n^{\frac{1}{q_0}-\frac{1}{q_0+\lambda_n}}
$$
and, since  $n^{\lambda_n}\to \infty$, we have $k_{\ell_{q_0+\lambda_n}^n, \ell_{p_0}^n}(q_0)\to \infty$. This and \eqref{contraejemplo} gives that $k_{q(\cdot), p_0}(q_0)=\infty$.
\end{remark}

\noindent \textbf{Acknowledgements.} We want to thank our friend Sheldy Ombrosi for useful conversations and comments regarding this work. We also thank the anonymous referees for many suggestions and comments that helped to improve the manuscript. Finally, we gratefully acknowledge the support provided by the scientific system of Argentina. Furthermore, we wish to express our concern regarding the ongoing defunding that universities and scientific agencies are currently facing.

\noindent \textbf{Disclosure statement.} The authors declare that they have no conflicts of interest.

\end{document}